\newcommand{\Qq}{\mathbb Q}
\newcommand{\Zz}{\mathbb Z}
\newcommand{\des}{{\rm Des}}
\newcommand{\ra}{\rightarrow}
\def\ker{{\rm Ker}}
\def\obs{{\rm Obs}}
\def\res{{\rm Res}}
\def\infl{{\rm Inf}}
\newtheorem{thm}{Theorem}[section]
\newtheorem{pro}[thm]{Proposition}
\newtheorem{cor}[thm]{Corollary}
\newtheorem{lem}[thm]{Lemma}
\newtheorem{rem}{Remark}
\title[Borel-Smith functions and endo-trivial modules]{Gluing Borel-Smith functions and the group of endo-trivial modules}
\author{Olcay COŞKUN}
\thanks{The author is supported by Tübitak through Kariyer
Programı, (Tübitak/3501/109T661).}
\begin{document}

\maketitle
\begin{center}
\footnotesize Boğaziçi Üniversitesi, Matematik Bölümü
80815 Bebek, İstanbul, Turkey.\\
olcay.coskun@boun.edu.tr
\end{center}
\begin{abstract}
The aim of this paper is to describe the group of endo-trivial
modules for a $p$-group $P$, in terms of the obstruction group for
the gluing problem of Borel-Smith functions. Explicitly, we shall
prove that there is a split exact sequence
\begin{displaymath}
\begin{diagram}
0&\rTo&\Zz &\rTo &T(P)&\rTo& \obs(C_b(P))&\rTo&0
\end{diagram}
\end{displaymath}
of abelian groups where $T(P)$ is the endo-trivial group of $P$,
and $C_b(P)$ is the group of Borel-Smith functions on $P$. As a
consequence, we obtain a set of generators of the group $T(P)$
that coincides with the relative syzygies found by Alperin. In
order to prove the result, we solve gluing problems for the
functor $B^*$ of super class functions, the functor $\mathcal
R_\Qq^*$ of rational class functions and the functor $C_b$ of
Borel-Smith functions.

\end{abstract}
\tableofcontents
\section{Introduction}
In \cite{Dade-endo-1} and \cite{Dade-endo-2} Dade introduced the
class of endo-permutation modules for $p$-groups together with a
certain sub-class of endo-trivial modules and classified them for
abelian $p$-groups. He also raised the question of the
classification of endo-permutation modules for all $p$-groups. A
systematic study of the endo-permutation modules is done by
considering the group of the so-called capped endo-permutation
modules, now called the Dade group and denoted by $D(P)$ for the
group $P$. The \emph{subgroup} of endo-trivial modules is denoted
by $T(P)$.

The classification problem for endo-permutation modules is solved,
after 25 years since it was raised, in several steps. One of the
most important steps in this direction is the description of the
group $T(P)$ of the endo-trivial modules by Carlson and
Th\'{e}venaz in \cite{CaThClssTorTri} and \cite{CaThClssTri}. The
final classification was done by Bouc in \cite{Bouc-DadeGroup}
where he used the description of the group $T(P)$ and the theory
of biset functors, introduced and developed by him. See
\cite{Bouc-book} for the theory of biset functors.

This paper is a contribution to the description of the group
$T(P)$. Regarding the original description, in
\cite{Alperin-endo}, Alperin constructed a certain set of relative
syzygies associated to a finite $p$-group $P$ and proveed that
they generate a subgroup of finite index in the group $T(P)$ of
all endo-trivial modules. Later Carlson and Th\'{e}venaz in
\cite{CaThClssTri} showed that this subgroup is equal to the
torsion free part of the group $T(P)$. Also in
\cite{CaThClssTorTri}, they describe the torsion part of the group
$T(P)$ completing the classification of endo-trivial modules for
$p$-groups. The above set of generators is also constructed by
Carlson in \cite{Ca1} using cohomology. We refer to \cite{Ca1},
\cite{CaThClssTri} or \cite{Thevenaz-guided}, for further details.

The aim of this paper is to give an another construction of the
above set of relative syzygies and to prove that they generate the
group $T(P)$. To construct the relative syzygies we relate the
group of endo-trivial modules to the group of obstructions for the
gluing problem for Borel-Smith functions and hence use the theory
of biset functors for the construction. We do use the
classification theorem of Carlson and Th\'{e}venaz regarding the
torsion part of the group $T(P)$ but we do not need the full
classification of endo-permutation (or endo-trivial) modules. See
the end of the paper for a revision of the results that we have
used throughout the paper.

Returning to the new construction of the relative syzygies, the
gluing problem for a biset functor $F$ at a finite group $G$ is
the problem of finding an element $f\in F(G)$ associated to a
given sequence $(f_H)_{1< H\le G}$ where $f_H$ is an element in
$F(N_G(H)/H)$, if it exists, see Section \ref{sec:gluingGeneral}
for details. This problem is first considered by Bouc and
Th\'{e}venaz \cite{BoucThe-glue} for the functor of torsion
endo-permutation modules and later by Bouc \cite{Bouc-glue} for
arbitrary endo-permutation modules, both for $p$-groups with $p$
odd. The first problem is solved whereas the latter is incomplete.
In this paper, we consider the gluing problem for three more biset
functors, namely for the functor of super class functions, for the
functor of rational class functions and for the functor of
Borel-Smith functions.

The relevance of the gluing problem of the functor of Borel-Smith
functions to the group $T(P)$ is given by our main result, Theorem
\ref{thm:trivExa}, which proves that there is a split exact
sequence of abelian groups
\begin{displaymath}
\begin{diagram}
0&\rTo&\Zz &\rTo &T(P)&\rTo& \obs(C_b(P))&\rTo&0 \qquad (\ast)
\end{diagram}
\end{displaymath}
where $T(P)$ is the endo-trivial group of $P$, and $C_b(P)$ is the
group of Borel-Smith functions on $P$.

To obtain this exact sequence, we use the exact sequences
\begin{displaymath}
\begin{diagram}
0&\rTo&C_b &\rTo &B^*&\rTo& D^\Omega&\rTo&0
\end{diagram}
\end{displaymath}
and
\begin{displaymath}
\begin{diagram}
0&\rTo&C_b &\rTo &\mathcal R_\Qq^*&\rTo& D_t^\Omega&\rTo&0
\end{diagram}
\end{displaymath}
of biset functors of Theorem 1.2 and Theorem 1.3 of
\cite{Bouc-yalcin}, and the result of Puig which determines the
kernel of the map $D(P)\ra \varprojlim_{1<Q\le P}D(N_P(Q)/Q)$ as
the endo-trivial group $T(P)$. Finally we need the solutions of
the other to gluing problem mentioned above.

Now the exact sequence $(\ast)$ is the kernel-cokernel sequence of
the following commutative diagram
\begin{displaymath}
\begin{diagram}
0&\rTo&C_b(P) &\rTo &B^*(P)&\rTo^{\Psi_P}&D^\Omega(P)&\rTo&0\\
 & &\dTo^{r_P^{C_b}} & & \dTo^{r_P^{B^*}}&&\dTo^{r_P^{D}}&&\\
0&\rTo&\varprojlim_{1<Q\le P}C_b(W_P(Q)) &\rTo
&\varprojlim_{1<Q\le
P}B^*(W_P(Q))&\rTo^{\tilde\Psi_P}&\varprojlim_{1<Q\le
P}D^\Omega(W_P(Q))
\end{diagram}
\end{displaymath}
with exact rows, where we put $W_P(Q):= N_P(Q)/Q$ and the
splitting can be constructed by considering a similar commutative
diagram with first row equal to the exact sequence of Theorem 1.3
in \cite{Bouc-yalcin}. See Section \ref{sec:mainResults} for
further details.

Regarding the gluing problems, the solution for the gluing problem
for super class functions is in Section \ref{sec:gluingSuperClss}.
We prove, as Theorem \ref{thm:exaBurn}, that there is an exact
sequence
\begin{displaymath}
\begin{diagram}
0&\rTo&\Zz &\rTo &B^*(G)&\rTo& \varprojlim_{1<H\le
G}B^*(N_G(H)/H)&\rTo&0
\end{diagram}
\end{displaymath}
of abelian groups. In particular, the gluing problem for super
class functions always has infinitely many solutions. Moreover, it
is clear from the proof of this theorem, given in Section
\ref{sec:gluingSuperClss}, that solutions to a given gluing data
differ only by their evaluations at the trivial subgroup.

The solution of the problem for rational class functions for a
$p$-group $P$ of rank at least $2$ is given by the exact sequence
\begin{displaymath}
\begin{diagram}
0&\rTo&\mathcal R_\Qq^*(P) &\rTo^{r_P} &\varprojlim_{1<Q\le
P}\mathcal R_\Qq^*(N_P(Q)/Q)&\rTo^{\tilde d_P}& \tilde
H^0(\mathcal A_{\ge 2}(P), p\Zz)^P&\rTo&0
\end{diagram}
\end{displaymath}
of abelian groups. See Section \ref{sec:gluingRatclss} for the
details of the notation. Note that if the rank of $P$ is $1$ then
$\mathcal R_\Qq^* = B^*$ and the problem is reduced to the
previous case.

The gluing problem for Borel-Smith functions is solved in Section
\ref{sec:gluingBS} where we prove that for a $p$-group of rank at
least $2$, there is an exact sequence
\begin{displaymath}
\begin{diagram}
0&\rTo&C_b(P) &\rTo^{r_P} &\varprojlim_{1<Q\le
P}C_b(N_P(Q)/Q)&\rTo^{\tilde d_P}& \tilde H^0_b(\mathcal A_{\ge
2}(P), p\Zz)^P&\rTo&0
\end{diagram}
\end{displaymath}
of abelian groups.

When the group $P$ has rank $1$, we obtain the exact sequence
\begin{displaymath}
\begin{diagram}
0& \rTo & \Zz & \rTo & C_b(P) & \rTo^{r_P} & \varprojlim_{1<Q\le
P}C_b(N_P(Q)/Q)& \rTo& \Zz/m_P\Zz&\rTo&0
\end{diagram}
\end{displaymath}
of abelian groups where $m_P = 2$ if $P$ is cyclic and $m_P = 4$
if $P$ is generalized quaternion.

Finally, to simplify our notation, we introduce a general setup
for the gluing problem for an arbitrary biset functor in Section
\ref{sec:gluingGeneral}.

\section{Gluing problem in general}\label{sec:gluingGeneral}

Let $F$ be a biset functor and $G$ be a finite group. A
\emph{gluing data} for $F(G)$ is a sequence $(f_H)_{1<H\le G}$ of
elements $f_H\in F(N_G(H)/H)$ satisfying the following
compatibility conditions:
\begin{enumerate}
  \item[(i)] (Conjugation invariance) For any $g\in G$ and $H\le G$, we have $${}^gf_H = f_{{}^gH}.$$
  \item[(ii)] (Destriction invariance) For any pair $H\trianglelefteq K$ of subgroups of $G$, we have $$\des^{N_G(H)/H}_{N_G(H,K)/K}f_H = \res^{N_G(K)/K}_{N_G(H,K)/K}f_K. $$
\end{enumerate}
Here we use the notation $\des^{N_G(H)/H}_{N_G(H,K)/K}$ for the
composition of deflation and restriction maps. Note that the same
composition is written as Defres in \cite{Bouc-glue}. We call the
composite map $\des$ the \emph{destriction map}.

Now the \emph{gluing problem} for the biset functor $F$ at $G$ is
the problem of finding an element $f\in F(G)$ such that for any
non-trivial subgroup $H$ of $G$, we have
\[
\des^G_{N_G(H)/H}f = f_H.
\]
If such an element exists, we call it a \emph{solution} to the
given gluing data. Following Bouc and Th\'{e}venaz
\cite{BoucThe-glue}, we denote by $$\varprojlim_{1<H\le
G}F(N_G(H)/H)$$ the set of all gluing data for $F(G)$.

It is clear that given a biset functor $F$, there is a
homomorphism
\[
r_G:= r_G^F: F(G) \ra \varprojlim_{1<H\le G}F(N_G(H)/H)
\]
given by associating an element $f\in F(G)$ to the sequence
$(\des^G_{N_G(H)/H}f)_{1< H\le G}$. Now the gluing problem is
asking if this map is surjective, cf. \cite[Section
2]{BoucThe-glue}. A \emph{complete solution to the gluing problem}
is an exact sequence
\begin{displaymath}
\begin{diagram}
0&\rTo&\ker(r_G) &\rTo &F(G)&\rTo^{r_G}& \varprojlim_{1<H\le
G}F(N_G(H)/H)&\rTo&\obs(F(G))&\rTo&0
\end{diagram}
\end{displaymath}
where Obs$(F(G))$ denotes the cokernel of the map $r_G$ and it is
the group of obstructions for the gluing data to have a solution.

An easy (but very useful) observation about the general gluing
problem is that we can lift any short exact sequence of biset
functors to an exact sequence of the corresponding groups of
gluing data and hence relate the obstruction groups for the
solutions. This can be done as follows.

\begin{pro}\label{prop:commDiagGen} Let
\begin{displaymath}
\begin{diagram}
0&\rTo&M &\rTo &F&\rTo^\Phi& N&\rTo&0
\end{diagram}
\end{displaymath}
be an exact sequence of biset functors. Then the diagram
\begin{displaymath}
\begin{diagram}
0&\rTo&M(G) &\rTo &F(G)&\rTo^{\Phi^G}& N(G)&\rTo&0\\
 & &\dTo^{r_G^M} & & \dTo^{r_G^F}&&\dTo^{r_G^N}&&\\
0&\rTo&\varprojlim_{1<H\le G}M(W_G(H)) &\rTo &\varprojlim_{1<H\le
H}F(W_G(H))&\rTo^{\tilde\Phi^G}& \varprojlim_{1<H\le G}N(W_G(H))&&
\end{diagram}
\end{displaymath}
commutes and has exact rows. Here $\tilde\Phi^G$ is defined as the
sequence $(\Phi^{W_G(H)})_{1<H\le G}$ of maps and we put
$W_G(H):=N_G(H)/H$.
\end{pro}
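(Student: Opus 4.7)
The plan is to verify three things in sequence: that both arrows in the bottom row are well-defined, that the two squares commute, and that the bottom row is exact at its first two terms. The top row is exact by hypothesis, since short exact sequences of biset functors are objectwise exact.

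For well-definedness of the bottom-row arrows, I would denote the inclusion $M\hookrightarrow F$ by $\iota$ and invoke the basic fact that any natural transformation of biset functors intertwines every biset operation, in particular restriction, deflation, and conjugation. From this it should follow that applying $\iota_{W_G(H)}$ or $\Phi^{W_G(H)}$ componentwise to a gluing data family preserves both the conjugation invariance condition (i) and the destriction invariance condition (ii), so that the componentwise maps $(m_H)_H\mapsto(\iota_{W_G(H)}(m_H))_H$ and $(f_H)_H\mapsto(\Phi^{W_G(H)}(f_H))_H$ land inside the appropriate inverse limits and furnish the arrows of the bottom row.

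For commutativity of the squares I expect a direct naturality argument: for $m\in M(G)$, both compositions around the left square produce the family $(\iota_{W_G(H)}(\des^G_{W_G(H)} m))_H$, because $\iota$ commutes with each destriction map $\des^G_{W_G(H)}$; and the right square is handled identically, replacing $\iota$ by $\Phi$.

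For exactness of the bottom row the plan is a standard inverse-limit diagram chase. Injectivity of the left arrow is inherited componentwise from the injectivity of each $\iota_{W_G(H)}$. For exactness at the middle term, given $(f_H)_H$ in the kernel of $\tilde\Phi^G$, objectwise exactness of the top row produces, for each $H$, a unique preimage $m_H\in M(W_G(H))$ with $\iota_{W_G(H)}(m_H)=f_H$; uniqueness together with the injectivity of $\iota$ at the relevant subquotients then forces the resulting family $(m_H)_H$ to inherit conditions (i) and (ii) from $(f_H)_H$ and thus to assemble into a gluing data for $M$. I do not anticipate a serious obstacle; the only mild subtlety is the verification that natural transformations of biset functors preserve the specific compatibility conditions built into the definition of gluing data, which is just a rephrasing of the fact that such transformations intertwine the biset operations used in those conditions.
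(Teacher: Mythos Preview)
Your proposal is correct and follows exactly the approach the paper indicates: the paper simply states that the proof is trivial since $\Phi$ is a morphism of biset functors, and your plan unpacks precisely that naturality argument in full detail. There is nothing to add or correct.
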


The proof of the above proposition is trivial since $\Phi$ is a
morphism of biset functors. As a corollary, applying the snake
lemma, we get an exact sequence of abelian groups
\begin{displaymath}
\begin{diagram}
0&\rTo&K(r_G^M) &\rTo &K(r_G^F)&\rTo&
K(r_G^N)&\rTo&\obs(M(G))&\rTo&\obs(F(G))&\rTo &\obs(N(G))
\end{diagram}
\end{displaymath}
where we put $K(r_G^F):=\ker(r_G^F)$.

\begin{rem}\label{rem:destFunc}
Given a biset functor $F$, it is clear that the functor
associating a group $G$ to its gluing data $\varprojlim_{1<H\le
G}F(W_G(H))$ is not a biset functor. Nevertheless this functor has
a structure of a destriction functor. By a \emph{destriction
functor}, we mean a biset functor without induction and inflation
and in this sense, they are generalizations of restriction
functors from the context of Mackey functors, see \cite{Co-Alc}
for details. Indeed, the maps
\[
\res^G_K: \varprojlim_{1<H\le G}F(W_G(H))\ra \varprojlim_{1<H\le
K}F(W_K(H))
\]
for any pair of groups $K\le G$ are defined in \cite{BoucThe-glue}
and in a similar way, we can define deflation map and transport of
structure by a group isomorphism. We skip the details of the
definition of these maps.
\end{rem}

We end the section by the following generalization of a result of
\cite{BoucThe-glue}, which we will use later. It shows that the
gluing problem for an arbitrary biset functor $F$ at an elementary
abelian group is always solvable. We skip the proof since it is
almost identical to the proof of Lemma 2.2 of \cite{BoucThe-glue}.
\begin{lem}[Bouc-Th\'{e}venaz]\label{lem:elemAbel}
Let $E$ be an elementary abelian group. Then the map
\[
r_E^F: F(E) \ra \varprojlim_{1<F\le E}F(E/F)
\]
is surjective with a section given by
\[
(u_F)_{1<F\le E}\mapsto -\sum_{1<F\le E}\mu(1,F)\infl_{E/F}^E u_F.
\]
where $\mu$ is the Mobius function of the poset of subgroups of
$E$.
\end{lem}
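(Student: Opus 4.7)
The plan is to verify directly that the proposed map $s\colon (u_F)_{1<F\le E}\mapsto f := -\sum_{1<F\le E}\mu(1,F)\,\infl_{E/F}^E u_F$ is a section of $r_E^F$. For this, fix a gluing datum $(u_F)_{1<F\le E}$ and a nontrivial subgroup $K\le E$; the task is to show $\defl^E_{E/K}f = u_K$.

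The first ingredient is the biset functor identity
\[
\defl^E_{E/K}\circ\infl^E_{E/F} \;=\; \infl^{E/K}_{E/KF}\circ\defl^{E/F}_{E/KF},
\]
valid because every subgroup of $E$ is normal, so the $(E/K,E/F)$-biset $(E/K)\times_E(E/F)$ is isomorphic to $E/KF$ via $[xK,yF]\mapsto xy\cdot KF$. Combined with the compatibility condition of the gluing datum, which for the abelian group $E$ reduces to $\defl^{E/F}_{E/L}u_F = u_L$ whenever $1<F\le L$, this yields
\[
\defl^E_{E/K}f \;=\; -\sum_{1<F\le E}\mu(1,F)\,\infl^{E/K}_{E/KF}u_{KF}.
\]
Regrouping by $L := KF$ (note that $L\ge K$ and $F\le L$ is automatic), the problem reduces to the purely combinatorial identity
\[
\sum_{\substack{1<F\le L \\ KF=L}}\mu(1,F) \;=\;
\begin{cases} -1 & \text{if } L = K,\\ \phantom{-}0 & \text{if } L > K, \end{cases}
\]
in the subgroup lattice of $E$.

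The case $L = K$ is immediate since $\sum_{F\le K}\mu(1,F) = 0$ when $K>1$, giving $\sum_{1<F\le K}\mu(1,F) = -\mu(1,1) = -1$. For $L>K$, the condition $F>1$ is automatic (because $KF = L > K$ forces $F\ne 1$), and writing $\pi\colon L\to L/K$ for the projection, the identity $[\pi(F) = L/K] = \sum_{M\ge \pi(F)}\mu(M,L/K)$ permits an interchange of sums:
\[
\sum_{\substack{F\le L\\ F+K=L}}\mu(1,F) \;=\; \sum_{M\le L/K}\mu(M,L/K)\sum_{F\le \pi^{-1}(M)}\mu(1,F).
\]
Since $\pi^{-1}(M)\supseteq K > 1$, each inner sum equals $[\pi^{-1}(M) = 1] = 0$, and hence the whole sum vanishes.

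The main obstacle is the combinatorial identity above; once it is in hand, everything else is routine biset-functor bookkeeping. The argument is essentially the one in Lemma 2.2 of \cite{BoucThe-glue}, and one expects it to transfer to an arbitrary biset functor $F$ without change, since the relations we invoked ($\defl\circ\infl$ factoring as $\infl\circ\defl$ for normal subgroups, and the compatibility of the gluing datum) are both general.
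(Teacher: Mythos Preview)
Your proof is correct and follows precisely the approach the paper intends: the paper omits the argument entirely, referring to Lemma~2.2 of \cite{BoucThe-glue}, and your write-up supplies exactly those details (the $\defl\circ\infl=\infl\circ\defl$ identity in the abelian case, the regrouping by $L=KF$, and the M\"obius-function computation). There is nothing to add.
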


\section{Gluing super class functions}\label{sec:gluingSuperClss}

Let $G$ be a finite group. We denote by $B(G)$ the Burnside ring
of $G$. We recall that the Burnside ring of a finite group $G$ is
the Grothendieck ring of the category of finite  $G$-sets and as a
free abelian group, it is generated by the set of conjugacy
classes of subgroups of $G$. By $B^*(G)$ we denote the $\Zz$-dual
of the Burnside ring. It is well-known that the group $B^*(G)$ can
be identified with the ring of super class functions $S(G)\ra
\Zz$. Here $S(G)$ is the set of all subgroups of $G$ and a
\emph{super class function} is a function which is constant on the
conjugacy classes of subgroups of $G$. The functor sending a group
$G$ to the dual Burnside ring $B^*(G)$ is a biset functor with the
usual actions of bisets. We refer to \cite{Bouc-yalcin} for
further details.

The following theorem shows that the answer to the the gluing
problem for super class functions is always affirmative.
\begin{thm}\label{thm:exaBurn} There is an exact sequence of abelian groups
\begin{displaymath}
\begin{diagram}
0&\rTo&\Zz &\rTo &B^*(G)&\rTo^{r_G}& \varprojlim_{1<H\le
G}B^*(N_G(H)/H)&\rTo&0
\end{diagram}
\end{displaymath}
where the map $r_G$ is as defined in the previous section.
\end{thm}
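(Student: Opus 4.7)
The plan is to exploit the very concrete description of the destriction map on $B^*$. Under the standard identification of $B^*(G)$ with the abelian group of super class functions $f\colon S(G)\to\Zz$, restriction is just restriction of functions, and dualising the inflation map on $B$ one checks that
\[
(\defl^G_{G/N}f)(K/N)=f(K)\quad\text{for }N\le K\le G.
\]
Consequently, for any $1<H\le G$ and any subgroup $L/H$ of $W_G(H)=N_G(H)/H$,
\[
(\des^G_{W_G(H)}f)(L/H)=f(L).
\]

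From this formula the kernel of $r_G$ falls out immediately. If $r_G(f)=0$, specialising to $L=H$ gives $f(H)=0$ for every $1<H\le G$; conversely any super class function supported at the trivial subgroup lies in the kernel, since every destriction value $f(L)$ above occurs at some $L\ge H>1$. So $\ker(r_G)$ is exactly the set of super class functions supported at $\{1\}$, which is the copy of $\Zz$ appearing on the left of the stated sequence.

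The content of the theorem is therefore the surjectivity of $r_G$. Given a gluing datum $(f_H)_{1<H\le G}$, I would define $f\in B^*(G)$ by $f(1):=0$ and $f(L):=f_L(L/L)$ for every $1<L\le G$, where $L/L$ denotes the trivial subgroup of $W_G(L)$. Conjugation invariance of the datum, ${}^gf_L=f_{{}^gL}$, specialises to $f({}^gL)=f(L)$ because the transport of structure by $g$ carries the trivial subgroup to the trivial subgroup, so $f$ is indeed a super class function on $G$. To verify $\des^G_{W_G(H)}f=f_H$ for each $1<H\le G$, evaluate both sides at an arbitrary subgroup $L/H$ of $W_G(H)$. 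The destriction formula gives $(\des^G_{W_G(H)}f)(L/H)=f(L)$. For $L=H$ this equals $f_H(H/H)$ by construction, while for $L>H$ the destriction invariance applied to the pair $H\trianglelefteq L$ yields
\[
f_H(L/H)=\bigl(\des^{W_G(H)}_{N_G(H,L)/L}f_H\bigr)(L/L)=\bigl(\res^{W_G(L)}_{N_G(H,L)/L}f_L\bigr)(L/L)=f_L(L/L)=f(L),
\]
as required.

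There is no serious obstacle here; once the destriction on $B^*$ has been unpacked, the argument reduces to bookkeeping. The only mildly delicate point is the construction of the section: one must check that the simple-minded recipe $f(L):=f_L(L/L)$ really does glue back to reproduce $f_H$ at \emph{every} subgroup $L/H$ of $W_G(H)$, and this is exactly what destriction invariance of the datum, evaluated at the bottom subgroup $L/L$, delivers. The factor $\Zz$ in the kernel records the complete freedom left in the choice of $f(1)$, and the resulting map $\Zz\to B^*(G)$, $c\mapsto c\cdot\delta_{\{1\}}$, visibly splits the sequence on the left.
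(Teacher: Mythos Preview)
Your proof is correct and follows essentially the same approach as the paper: both identify the kernel via the destriction formula $(\des^G_{W_G(H)}f)(L/H)=f(L)$, and both construct the section by setting $f(L):=f_L(L/L)$ and verifying it works via the destriction invariance of the gluing datum evaluated at the bottom subgroup. Your observation about the explicit splitting $c\mapsto c\cdot\delta_{\{1\}}$ is a small bonus beyond what the paper records in its proof.
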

\begin{proof}
The proof has two parts. First we prove that the kernel of $r_G$
is isomorphic to $\Zz$ and second we show that the map $r_G$ is
surjective.

For the first part, let $f$ be a super class function with $r_G(f)
= 0$. By the definition of the map $r_G$, we have
\[
\des^G_{N_G(H)/H} f = 0
\]
for any non-trivial subgroup $H$ of $G$. But by the definition of
the action of destriction on $B^*$, we have
\[
0= (\des^G_{N_G(H)/H} f)(L/H) = f(L)
\]
for any $L/H\le N_G(H)/H$. Therefore the super class function $f$
is zero at all non-trivial subgroups of $G$. Also it is clear that
its value at the trivial subgroup can be chosen freely. Therefore
we have proved that a super class function $f$ is in the kernel of
$r_G$ if and only if it is zero at any non-trivial subgroup of
$G$, as required.

To prove that $r_G$ is surjective, let $(f_H)_{1<H\le G}$ be a
gluing data. We claim that the super class function $f\in B^*(G)$
defined by $f(H) = f_H(H/H)$ and $f(1) = 0$ is a pre-image of the
given gluing data. Firstly, $f$ is a super class function since
for any $H\le G$ and $g\in G$, we have
\[
f({}^xH) = f_{{}^xH}({}^xH/{}^xH) = ({}^xf_H)({}^xH/{}^xH) =
{}^x(f_H(H/H)) = f_H(H/H) = f(H).
\]
Here we use the conjugation invariance of the gluing data. Now it
remains to show that for any $1<H\le G$, we have
\[
\des^G_{N_G(H)/H} f = f_H.
\]
Notice that by the destriction invariance of the gluing data, for
any pair $H\trianglelefteq K$, we have
\[
f_H(K/H) = f_K(K/K).
\]
Indeed we have
\[
f_H(K/H) = (\des^{N_G(H)/H}_{N_G(H,K)/K}f_H)(K/K)=
(\res^{N_G(K)/K}_{N_G(H,K)/K}f_K)(K/K)=f_K(K/K).
\]
Now it follows that
\[
(\des^G_{N_G(H)/H} f)(K/H) = f(K) = f_K(K/K) = f_H(K/H),
\]
for any $K/H\le N_G(H)/H$, as required.
\end{proof}

\begin{cor}\label{cor:solSupClss} A gluing problem for super class functions at a finite group $G$ has infinitely many
solutions.
\end{cor}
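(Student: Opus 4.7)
The plan is to read the corollary off directly from the exact sequence established in Theorem~\ref{thm:exaBurn}. Given a gluing datum $(f_H)_{1<H\le G}$, the surjectivity half of Theorem~\ref{thm:exaBurn} produces at least one solution $f\in B^*(G)$ with $r_G(f)=(f_H)_{1<H\le G}$. Any other solution $f'$ satisfies $r_G(f-f')=0$, hence $f-f'\in\ker(r_G)$, and conversely for every $k\in\ker(r_G)$ the element $f+k$ is again a solution. So the set of solutions is a coset of $\ker(r_G)$ in $B^*(G)$.

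Next I would invoke the kernel half of Theorem~\ref{thm:exaBurn}, which identifies $\ker(r_G)\cong\Zz$: explicitly, this kernel consists of the super class functions supported at the trivial subgroup, which form a free abelian group of rank one. Since $\Zz$ is infinite, the coset of solutions is infinite as well.

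Putting these two observations together, I conclude that whenever a gluing datum for super class functions admits a solution, it admits infinitely many, parametrized by a free choice of value at the trivial subgroup. There is no real obstacle here since the proof is just bookkeeping on the exact sequence; the only thing to be careful about is to note explicitly that the parametrization is by the kernel and that this kernel is $\Zz$ rather than merely nonzero, so that infinitude (and not just nonuniqueness) is guaranteed.
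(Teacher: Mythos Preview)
Your argument is correct and is essentially the same as the paper's: the paper simply writes down the explicit solution $f$ with $f(H)=f_H(H/H)$ for $H\neq 1$ and $f(1)=a$ for arbitrary $a\in\Zz$, which is exactly the coset-of-$\ker(r_G)\cong\Zz$ parametrization you describe. The only cosmetic difference is that you phrase it abstractly via the exact sequence while the paper restates the concrete formula from the proof of Theorem~\ref{thm:exaBurn}.
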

\begin{proof}
Let $(f_H)_{1<H\le G}$ be a gluing data. Then the super class
function $f$ defined by putting $f(H) := f_H(H/H)$ if $H\neq 1$
and $f(G/1) = a$ is a solution of the gluing data for any $a\in
\Zz$.
\end{proof}

\section{Gluing rational class functions for $p$-groups}\label{sec:gluingRatclss}
Let $p$ be a prime number and $P$ be a finite $p$-group. Denote by
$\mathcal R_\Qq(P)$ the rational character ring of $P$. This is
the Grothendieck ring of the category of $\Qq P$-modules. With the
usual definitions of induction, restriction, inflation, deflation
and transport of structure, the assignment $P\mapsto \mathcal
R_\Qq(P)$ is a biset functor. We denote the $\Zz$-dual of this
biset functor by $\mathcal R^*_\Qq$. We call an element of
$\mathcal R_\Qq^*(P)$ a \emph{rational class function} for $P$. We
refer to \cite{Bouc-book} for further details.

Recall that by a theorem of Ritter and Segal, the linearization
map lin$: B(P)\ra \mathcal R_\Qq(P)$, associating a finite $P$-set
$X$ to the permutation $\Qq P$-module $\Qq X$ with basis $X$, is
surjective. Hence its transpose gives an injective map $\mathcal
R_\Qq^*(P) \ra B^*(P)$, cf \cite{Bouc-DadeBurnside}. Note also
that the linearization map is a morphism of biset functors.
Therefore the biset functor $\mathcal R_\Qq^*$ can be identified
with  a subfunctor of the dual $B^*$ of the Burnside functor $B$,
via the transpose of the linearization map.

Therefore this inclusion gives rise to an inclusion of the group
of gluing data for $\mathcal R_\Qq^*$ to that of $B^*$. Hence by
Corollary \ref{cor:solSupClss}, a gluing data for $\mathcal
R_\Qq^*$ can always be glued to a super class function. Thus we
need to determine when the solution is a rational class function.
We recall the following result from \cite[Lemma 4.2]{Bouc-yalcin}.
In the following, we denote the unique group of order $p$ by
$C_p$.

\begin{lem} [Bouc-Yalçın \cite{Bouc-yalcin}]\label{lem:buya-clss} Let $f$ be a super class function for the $p$-group
$P$. Then $f$ is a rational class function if and only if for any
subquotient $T/S$ of $P$, isomorphic to $C_p\times C_p$, the
condition
\[
f(P/S) - f(P/T) = \sum_{S < X < T}(f(P/X) - f(P/T))
\;\;\;\;\;\;\;\; (*).
\]
is satisfied.
\end{lem}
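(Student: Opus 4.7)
The plan is to view $f \in B^*(P)$ as a $\Zz$-linear form on $B(P)$, so that $f \in \mathcal R_\Qq^*(P)$ precisely when $f$ vanishes on the kernel of the linearization map $\lin: B(P) \ra \mathcal R_\Qq(P)$. Condition $(\ast)$ says exactly that $f$ vanishes on
\[
\eta_{T,S} := P/S + p\,(P/T) - \sum_{S<X<T} P/X
\]
in $B(P)$, where $P/H$ denotes the class of the transitive $P$-set in $B(P)$. The lemma therefore reduces to the statement that the family $\{\eta_{T,S}\}$, indexed by the subquotients $T/S \cong C_p \times C_p$ of $P$, has the same $\Qq$-span in $B(P)\otimes\Qq$ as $\ker(\lin)$. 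This is enough because $\ker(\lin)$ is pure in $B(P)$ (its quotient $\mathcal R_\Qq(P)$ being torsion-free), and a $\Zz$-valued form that kills a subgroup automatically kills its saturation.

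The inclusion $\langle\eta_{T,S}\rangle \subseteq \ker(\lin)$ is the easy half. Since $\lin$ is a morphism of biset functors it commutes with induction and inflation, and unravelling the definitions yields the identity $\eta_{T,S}^P = \ind_T^P\,\infl_{T/S}^T\,\eta_{T/S,1}^{T/S}$. It therefore suffices to verify $\lin(\eta_{E,1}^E) = 0$ when $E = C_p \times C_p$. In this case the rational irreducibles of $E$ are the trivial module $\Qq$ together with the $p+1$ inflations $\tilde\rho_Y$ of the faithful rational irreducible of $E/Y \cong C_p$, one for each subgroup $Y$ of order $p$, and using $\Qq E = \Qq \oplus \bigoplus_Y \tilde\rho_Y$ together with $\Qq[E/Y] = \Qq \oplus \tilde\rho_Y$ a short calculation yields the required vanishing.

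For the reverse inclusion a dimension count will suffice. The $\Zz$-rank of $\mathcal R_\Qq(P)$ equals the number of conjugacy classes of cyclic subgroups of $P$, so the rank of $\ker(\lin)$ equals the number of conjugacy classes of non-cyclic subgroups. For each non-cyclic $H \le P$ the Frattini quotient $H/\Phi(H)$ is elementary abelian of rank at least two, so there exists $S_H \triangleleft H$ with $\Phi(H)\le S_H$ and $H/S_H \cong C_p\times C_p$. Order the conjugacy classes of subgroups of $P$ by decreasing order. Then $\eta_{H,S_H}^P$ has coefficient $p$ at the basis vector $P/H$ and every one of its other basis contributions lies at some $P/K$ with $|K|<|H|$. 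The square submatrix indexed by the non-cyclic conjugacy classes, with rows the selected $\eta_{H,S_H}^P$, is thus lower triangular with diagonal entries $p$, and hence invertible over $\Qq$. This exhibits a $\Qq$-linearly independent family in $\ker(\lin)$ of the correct size, which therefore $\Qq$-spans $\ker(\lin)$.

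The main obstacle is this reverse inclusion, and within it the leading-term bookkeeping that ensures, for every non-cyclic $H$, that $\eta_{H,S_H}^P$ has no non-leading basis contribution at any conjugacy class of order at least $|H|$ other than $H$ itself; once $S_H$ is chosen with some care this becomes immediate, and the rank count then closes the argument.
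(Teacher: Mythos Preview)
The paper does not prove this lemma at all: it is quoted verbatim from \cite[Lemma~4.2]{Bouc-yalcin}, so there is nothing to compare your argument against here.

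That said, your argument is correct and complete. The reformulation---$f\in\mathcal R_\Qq^*(P)$ iff $f$ kills $\ker(\lin)$, and condition~$(\ast)$ iff $f$ kills the family $\{\eta_{T,S}\}$---is exactly right, and the reduction to showing that the $\eta_{T,S}$ span $\ker(\lin)\otimes\Qq$ is clean (purity of $\ker(\lin)$ being the point that lets you work rationally). The easy inclusion via $\ind_T^P\infl_{T/S}^T$ and the explicit check in $C_p\times C_p$ is fine. For the dimension count, your triangularity claim holds as stated: every term of $\eta_{H,S_H}^P$ other than $p\,[P/H]$ is supported on a conjugacy class of strictly smaller order than $|H|$, so no ``care'' in the choice of $S_H$ beyond $\Phi(H)\le S_H$ and $|H:S_H|=p^2$ is actually needed. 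The resulting lower-triangular submatrix with diagonal entries $p$ gives the required $\Qq$-independence, and the Artin count of rational irreducibles matches the number of non-cyclic classes, closing the argument.
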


Now let $(f_Q)_{1<Q\le P}$ be a gluing data for $\mathcal
R_\Qq^*(P)$ and let $f$ be a solution of the data in $B^*(P)$.
Also let $T/S$ be a subquotient isomorphic to $C_p\times C_p$. We
claim that the equality
\[
f(P/Y) = f_S((N_P(S)/S)/(Y/S))
\]
is satisfied for any $S\unlhd Y\le P$. Indeed, by the definition
of $f$, we have
\[
f(P/Y) = f_Y((N_P(Y)/Y)/(Y/Y))
\]
and by the destriction invariance of the gluing data we have
\[
f_Y((N_P(Y)/Y)/(Y/Y)) = f_S((N_P(S)/S)/(Y/S)).
\]
Here we apply the condition with $S\unlhd Y$. Therefore we can
write the above equality $(*)$ as
\[
f_S(N_P(S)/S) - f_S(N_P(S)/S) =\sum_{S < X < T}(f_S(N_P(S)/X) -
f_S(N_P(S)/T)).
\]
Here we shorten $(N_P(S)/S)/(Y/S)$ to $N_P(S)/Y$. Now if $S$ is
not the trivial subgroup, this condition is satisfied since
$f_S\in \mathcal R_\Qq^*(N_P(S)/S)$. Thus we have proved the
following lemma.

\begin{lem}\label{lem:myClss} Let $(f_Q)_{1<Q\le P}$ be a gluing data for $\mathcal R_\Qq^*(P)$ and let $f$ be a solution
of the data in $B^*(P)$. Then $f$ is a rational class function if
and only if for any elementary abelian subgroup $E$ of $P$ of rank
2, the condition $(*)$ of Lemma \ref{lem:buya-clss} is satisfied.
\end{lem}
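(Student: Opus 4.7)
The plan is to reduce the defining conditions for $f \in \mathcal R_\Qq^*(P)$, as supplied by Lemma~\ref{lem:buya-clss}, to those coming only from subquotients $T/S \cong C_p \times C_p$ with $S = 1$; these are precisely indexed by the rank-$2$ elementary abelian subgroups of $P$. Once the reduction is in place the equivalence in the statement is immediate.

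The ``only if'' direction is essentially free: if $f \in \mathcal R_\Qq^*(P)$, then Lemma~\ref{lem:buya-clss} applied to $f$ gives condition $(*)$ for every $C_p\times C_p$ subquotient $T/S$ of $P$, in particular for every rank-$2$ elementary abelian subgroup $E = T$ with $S = 1$.

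For the converse, I would assume condition $(*)$ holds at every rank-$2$ elementary abelian subgroup $E \le P$ and, appealing again to Lemma~\ref{lem:buya-clss}, verify $(*)$ for an arbitrary subquotient $T/S \cong C_p \times C_p$ of $P$. The case $S = 1$ is exactly the hypothesis. For $S \neq 1$, the key input is the identity
\[
f(P/Y) \;=\; f_S\bigl((N_P(S)/S)/(Y/S)\bigr) \qquad (S \unlhd Y \le P),
\]
which I would establish (as in the paragraph preceding the lemma) by first writing $f(P/Y) = f_Y(N_P(Y)/Y)$ from the definition of the solution $f$, and then using the destriction invariance of the gluing data for the pair $S \unlhd Y$ to move from $f_Y$ to $f_S$. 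Substituting this identity term by term turns condition $(*)$ for $f$ at $T/S$ into the very same condition for $f_S$ at the subquotient $T/S$ of $N_P(S)/S$; this last condition is guaranteed by Lemma~\ref{lem:buya-clss} applied internally to $N_P(S)/S$, since $f_S \in \mathcal R_\Qq^*(N_P(S)/S)$ by hypothesis on the gluing data.

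The only genuinely delicate ingredient is the translation identity between $f$ and $f_S$; everything else is a clean case split on whether $S$ is trivial. The statement of the lemma takes exactly the form it does because the untreated subquotients are precisely those of the shape $E/1$ with $E$ elementary abelian of rank $2$, and these are the only subquotients whose condition $(*)$ is not already absorbed into the hypothesis that each $f_Q$ is a rational class function on $N_P(Q)/Q$.
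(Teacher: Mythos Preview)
Your proposal is correct and follows essentially the same route as the paper: the argument in the paper is precisely the translation identity $f(P/Y)=f_S((N_P(S)/S)/(Y/S))$ for $S\unlhd Y$ (obtained from the solution property and destriction invariance), which reduces condition $(*)$ at a subquotient $T/S$ with $S\neq 1$ to the corresponding condition for $f_S\in\mathcal R_\Qq^*(N_P(S)/S)$, leaving only the case $S=1$. Your write-up matches this exactly, including the case split and the use of Lemma~\ref{lem:buya-clss} on both ends.
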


\label{subsec:fAt1} Our next aim is to determine the group of
obstructions for a gluing data for $\mathcal R_\Qq^*$. First note
that if $P$ has rank $1$, then the condition of the above Lemma is
trivially satisfied and hence we get that $\mathcal R_\Qq^* =
B^*$. Therefore we reduced to the previous case.

Therefore we assume that $P$ has rank at least $2$, hence $P$ is
neither cyclic nor generalized quaternion. Then by the previous
lemma, a solution $f$ of the gluing data $(f_Q)_{1<Q\le P}$ in the
dual Burnside group $B^*(P)$ is not a rational class function if
there is an elementary abelian subgroup $E$ of $P$ of rank 2 such
that the condition $(*)$ of Lemma \ref{lem:buya-clss} is not
satisfied. Notice that the condition $(*)$ actually determines the
value at the trivial group of the solution, since for $E$, the
condition $(*)$ can equivalently be written as
\[
f(P/1) = \sum_{1 < X < T}f(P/X) - pf(P/T)
\]
and the values $f(P/X)$ and $f(P/T)$ are determined by the gluing
data. Hence the obstruction group should check whether the value
at the trivial subgroup is well-defined, that is, if for all
possible choices of the subgroup $E$, the value $f(P/1)$ is
constant.

To determine the obstruction group, first we introduce a notation.
We denote by $\mathcal A_{\ge 2}(P)$ the $P$-poset of all
elementary abelian subgroups of $P$ of rank at least 2, and where
$P$ acts by conjugation. It is clear that any maximal elementary
abelian subgroup of $P$ of rank $2$ is an isolated vertex in
$\mathcal A_{\ge 2}(P)$. It is also easy to see that the rest of
the elementary abelian subgroups lie in a connected component,
which is called the \emph{big component} of $\mathcal A_{\ge
2}(P)$, see \cite[Lemma 2.1]{CaThClssTri}. We refer to Lemma 2.2
in \cite{CaThClssTri} and Lemma 3.1 in \cite{BoucThe-glue} for
further properties of this poset.

Now following Bouc and Th\'{e}venaz \cite{BoucThe-glue}, we write
$\tilde H^0(\mathcal A_{\ge 2}(P), \Zz)^P$ for the group of
$P$-invariant functions $f$ from $\mathcal A_{\ge 2}$ to the
additive group $\Zz$ of integers, satisfying the condition that
$f(E) = f(F)$ if $E\le F$, modulo the constant functions. By Lemma
3.1 in \cite{BoucThe-glue}, the group $\tilde H^0(\mathcal A_{\ge
2}(P), \Zz)^P$ is generated by the characteristic functions of the
classes of maximal elementary abelian subgroups of $P$ of rank 2.
Now we define a map, denoted by $\tilde d_P$, cf. \cite[Section
3]{BoucThe-glue},
\[
\tilde d_P: \varprojlim_{1<Q\le P}\mathcal R_\Qq^*(N_P(Q)/Q)\ra
\tilde H^0(\mathcal A_{\ge 2}(P), \Zz)^P
\]
by associating the data $(f_Q)_{1<Q\le P}$ to the class of the
function $\tilde f$ which maps a subgroup $E$ of order $p^2$ to
$\sum_{1 < X < E}f(P/X) -pf(P/E)$ and a subgroup $F$ of order more
than $p^2$ to $\tilde f(E)$ for some subgroup $E$ of $F$ of order
$p^2$.

Here we denote by $f$ the solution of $(f_Q)_{1<Q \le P}$ in
$B^*(P)$. Here after, we write $[\phi]$ to denote the image of
$\phi\in H^0(\mathcal A_{\ge 2}(P), \Zz)^P$ in the quotient group
$\tilde H^0(\mathcal A_{\ge 2}(P), \Zz)^P$.

\begin{lem}
The map
\[
\tilde d_P: \varprojlim_{1<Q\le P}\mathcal R_\Qq^*(N_P(Q)/Q)\ra
\tilde H^0(\mathcal A_{\ge 2}(P), \Zz)^P
\]
as defined above is a group homomorphism.
\end{lem}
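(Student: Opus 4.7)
The plan is to establish well-definedness of $\tilde d_P$ as a map into $\tilde H^0(\mathcal A_{\ge 2}(P),\Zz)^P$ and then to check additivity, which is immediate. Well-definedness splits into three parts: independence of the chosen solution $f\in B^*(P)$, $P$-invariance of $\tilde f$, and local constancy with respect to the poset structure on $\mathcal A_{\ge 2}(P)$.

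For any rank $2$ subgroup $E$ of $P$, the formula
\[
\tilde f(E)=\sum_{1<X<E}f(P/X)-pf(P/E)
\]
involves only the values $f(P/X)$ with $X\neq 1$. By destriction invariance of the data, each such value equals $f_X((N_P(X)/X)/(X/X))$, so $\tilde f(E)$ depends solely on the data $(f_Q)_{1<Q\le P}$ and not on the choice of solution $f$, whose only degree of freedom by Theorem~\ref{thm:exaBurn} is the value $f(P/1)$. The equality $\tilde f({}^gE)=\tilde f(E)$ for $g\in P$ follows by changing variables $X\mapsto {}^gX$ in the defining sum and applying the super class function property of $f$.

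The crux is local constancy: for any $F\in\mathcal A_{\ge 2}(P)$ of rank at least $3$ and any two rank $2$ subgroups $E_1,E_2\le F$, one must have $\tilde f(E_1)=\tilde f(E_2)$. I would first handle the case where $F$ has rank $3$ and $L:=E_1\cap E_2$ has rank $1$. Since $F$ is abelian and $L\le F$, the subquotient $F/L\cong C_p\times C_p$ of $N_P(L)/L$ makes sense, and Lemma~\ref{lem:buya-clss} applied to $f_L\in\mathcal R_\Qq^*(N_P(L)/L)$ at $F/L$, rewritten via destriction invariance, yields the identity
\[
f(P/L)=\sum_{\substack{L\le E\le F\\|E|=p^2}}f(P/E)-pf(P/F).
\]
Substituting this identity for each of the $p+1$ rank $1$ subgroups $L\le E_1$ into the defining expression for $\tilde f(E_1)$ and counting incidences (each rank $2$ subspace of the rank $3$ space $F$ distinct from $E_1$ meets $E_1$ in a unique line, contributing once, while $E_1$ itself contributes $p+1$ times) collapses the result to
\[
\tilde f(E_1)=\sum_{\substack{E\le F\\|E|=p^2}}f(P/E)-(p^2+p)f(P/F),
\]
which is manifestly symmetric in $E_1$. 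For $F$ of higher rank, if $E_1\cap E_2$ has rank $1$ then $E_1+E_2$ is a rank $3$ subspace of $F$ containing both and the previous case applies; if the intersection is trivial, choose rank $1$ subspaces $L_i\le E_i$ and interpolate by $E_3:=L_1+L_2$, so that $E_3$ meets each of $E_1,E_2$ in a line and two applications of the rank $3$ case give $\tilde f(E_1)=\tilde f(E_3)=\tilde f(E_2)$.

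With these pieces in hand, $\tilde f$ is constant on every connected component of $\mathcal A_{\ge 2}(P)$ meeting the rank $\ge 3$ locus, so the class $[\tilde f]$ is a well-defined element of $\tilde H^0(\mathcal A_{\ge 2}(P),\Zz)^P$. Additivity of $\tilde d_P$ then follows at once from the $\Zz$-linearity of the defining formula in $(f_Q)$. The main obstacle is the rank $3$ calculation sketched above; everything else is bookkeeping.
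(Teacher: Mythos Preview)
Your proof is correct, and it follows a genuinely different route from the paper's own argument for local constancy. The paper fixes a maximal element $E$ of the big component of $\mathcal A_{\ge 2}(P)$, invokes Lemma~\ref{lem:elemAbel} to produce an actual solution in $\mathcal R_\Qq^*(E)$ of the restricted gluing data, and then appeals to Lemma~\ref{lem:myClss} (that is, condition $(*)$ for a rational class function on $E$) to conclude that $\tilde f$ is constant on the rank $2$ subgroups of $E$; connectedness of the big component then finishes. By contrast, you never invoke Lemma~\ref{lem:elemAbel}: you work entirely inside a rank $3$ subgroup $F$, apply condition $(*)$ only to the individual components $f_L$ at the subquotient $F/L$, and carry out an explicit incidence count to rewrite $\tilde f(E_1)$ as the symmetric expression $\sum_{|E|=p^2,\,E\le F} f(P/E)-(p^2+p)f(P/F)$. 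Your interpolation via $E_3=L_1+L_2$ for the disjoint case is a clean way to bootstrap from rank $3$ to arbitrary rank.

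What each approach buys: the paper's argument is shorter and conceptually uniform (the same section from Lemma~\ref{lem:elemAbel} reappears elsewhere), but it relies on that extra lemma and on the structural fact that the big component is connected through a single maximal element. Your argument is more elementary and self-contained, needing only Lemma~\ref{lem:buya-clss} for the components $f_L$; the price is the combinatorial bookkeeping in the rank $3$ step, which you have carried out correctly.
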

\begin{proof}
First we need to check that the map is well-defined, that is, we
need to check that for any $(f_Q)_{1<Q\le P}\in
\varprojlim_{1<Q\le P}\mathcal R_\Qq^*(N_P(Q)/Q)$, the function
$\tilde f:=\tilde d_P((f_Q)_{1<Q\le P})$ is in $\tilde
H^0(\mathcal A_{\ge 2}(P), \Zz)^P$. First, it is clear, by
conjugation invariance of the gluing data, that the resulting
function is $P$-invariant. To prove the condition that $f(E) =
f(F)$ if $E\le F$, note that we only need to check this condition
for the big component of $\mathcal A_{\ge 2}(P)$, which contains a
normal elementary abelian $p$-subgroup. By Lemma 2.1 in
\cite{CaThClssTri}, this component is unique. So it suffices to
show that the function $\tilde f$ restricted to the set of
subgroups of order $p^2$ in the big component is constant. Let $E$
be a maximal element of the big component. Then by Lemma
\ref{lem:elemAbel}, the gluing data $\res^P_E((f_Q)_{1<Q\le P})$
has a solution which implies, by Lemma \ref{lem:myClss}, that for
any pair of subgroups $T,S$ of rank 2 of $E$, we have $\tilde f(T)
= \tilde f(S)$. Now the result follows since for any pair $T,S$ of
subgroups of rank 2 in the big component, $TS$ is elementary
abelian and contains both $T$ and $S$, as required. The claim that
$\tilde d_P$ is a group homomorphism is trivial.
\end{proof}

Unfortunately $\tilde d_P$ is not surjective but it is possible to
determine the image. First of all, if $\mathcal A_{\ge 2}(P)$ is
connected, then the group $\tilde H^0(\mathcal A_{\ge 2}(P),
\Zz)^P$ vanishes and hence the map $\tilde d_P$ becomes the zero
map. Thus, without loss of generality, we can assume that the
poset $\mathcal A_{\ge2}$ contains at least two connected
components.

Then by \cite[Lemma 3.1]{BoucThe-glue}, there is a unique central
subgroup $Z$ of $P$ of order $p$ contained in each maximal
elementary abelian subgroup of rank 2 and moreover given such a
group $E$, all the other subgroups of order $p$ are $P$-conjugate.

On the other hand, by \cite{Maz}, if $p$ is odd, and if the rank
of $P$ is at least 3, then by Proposition 2.5 in \cite{GlaMaz},
there  is a unique normal elementary abelian subgroup $E_0$ of $P$
of rank $2$ which is non-central and characteristic, which implies
that $E_0$ contains the subgroup $Z$ and all the other subgroups
of order $p$ are $P$-conjugate.

Similarly, if $p=2$ and the rank of $P$ is at least $3$, then
there is a non-maximal normal elementary abelian subgroup $E_0$ of
rank $2$ whose two non-central elements of order $2$ are
$P$-conjugate. (Note that in this case, as in the previous one, we
have $|P/C_P(E_0)| = 2$.)

Thus if $E$ is either an isolated vertex in $\mathcal A_{\ge
2}(P)$ or is equal to $E_0$, defined above, let $Y$ be a subgroup
of order $p$ in $E$ different from $Z$. Then we have
\[
\sum_{1 < X < E}f(P/X) - pf(P/E) = f(P/Z) + p(f(P/Y) - f(P/E))
\]
where we use the conjugation invariance of the gluing data to
collect together all the terms $f(P/X)$ with non-central $X$. Now
it is clear from this equation that
\[
[E\mapsto \sum_{1 < X < E}f(P/X) - pf(P/E)] = [E\mapsto p(f(P/Y) -
f(P/E))]
\]
for some non-central subgroup $Y$ of order $p$ in $E$. Therefore
the image of $\tilde d_P$ is contained in the subgroup $\tilde
H^0(\mathcal A_{\ge 2}(P), p\Zz)^P$ of $\tilde H^0(\mathcal A_{\ge
2}(P), \Zz)^P$ consisting of functions having values in $p\Zz$.

Next we show that this subgroup is equal to the image of $\tilde
d_P$. Note that the subgroup $\tilde H^0(\mathcal A_{\ge 2}(P),
p\Zz)^P$ is generated by functions $p\chi_E$ as $E$ runs over a
complete set of representatives of conjugacy classes of maximal
elementary abelian subgroups of rank 2 and where $\chi_E$ is the
characteristic function of the class of $E$.

Let $E$ be a maximal elementary abelian subgroup of $P$ of rank 2
and let $\chi_E$ be the characteristic function of the conjugacy
class of $E$. We define a sequence $(f_Q)_{1<Q\le P}$ as follows.
If $Q$ is not $P$-conjugate to $Y$, then put $f_Q = 0$. If $Q$ is
conjugate to $Y$, then define $f_Q$ as the super class function
for $N_P(Q)/Q$ given by $f_Q(N_P(Q)/T) = \delta_{T,Q}$, where
$\delta_{?,?}$ is the Kronecker's delta. We claim that the
sequence is a gluing data for $\mathcal R_\Qq^*(P)$ and its image
is equal to $p\chi_E$. The first claim is almost trivial since
most of the $f_Q$'s are zero and the group $N_P(Y)/Y$ is cyclic or
generalized quaternion by \cite[Lemma 3.1]{BoucThe-glue}. So we
need to check that $\tilde d_P((f_Q)_{1<Q\le P}) = p\chi_E$ but
this follows easily from the definition of the map $\tilde d_P$.

The above arguments prove that there is a surjective group
homomorphism
\[
\tilde d_P: \varprojlim_{1<Q\le P}\mathcal R_\Qq^*(N_P(Q)/Q)\ra
\tilde H^0(\mathcal A_{\ge 2}(P), p\Zz)^P.
\]
Next we show that the kernel of this map is exactly the image of
the map $r_P$. For this aim, let $f$ be a rational class function
and $(f_Q)_{1<Q\le P}$ be its image under $r_P$. Then
\[
\tilde d_P((f_Q)_{1<Q\le P})(E) = p(f(P/Y) - f(P/E))
\]
where $E$ and $Y$ are as above. But by Lemma \ref{lem:buya-clss},
we have $p(f(P/Y) - f(P/E)) = f(P/1) - f(P/Z)$ where $Z$ is as
before. Since $Z$ is contained in any maximal elementary abelian
subgroup of rank 2, the right hand side of the above equality is
independent of the choice of $E$, that is, $\tilde d_P(r_P(f))$ is
constant. Hence $\tilde d_P(r_P(f)) = 0$ as required. Similarly,
if $(f_Q)_{1<Q\le Q}$ is a sequence in the kernel of $\tilde d_P$,
then a solution of $(f_Q)_{1<Q\le P}$ in $B^*(P)$ can be chosen to
be a rational class function, by choosing the value at the trivial
subgroup to be the constant that is determined by the gluing data
under the map $\tilde d_P$ (so that the condition of Lemma
\ref{lem:myClss} is satisfied).

We remark that a similar argument shows that the map $r_P$ is
injective. Indeed a rational class function $f$ is in the kernel
of $r_P$ if and only if it is zero at any nontrivial subgroup of
$P$. But by Lemma \ref{lem:myClss}, the value at the trivial group
is uniquely determined by the values at nontrivial subgroups.
Hence $f$ should be the zero function, as required. With this
remark, we have finished the proof of the following theorem.

\begin{thm} \label{thm:glueRat} Let $p$ be a prime number and $P$ be a $p$-group of rank at least $2$. Then there is an
exact sequence of abelian groups
\begin{displaymath}
\begin{diagram}
0&\rTo&\mathcal R_\Qq^*(P) &\rTo^{r_P} &\varprojlim_{1<Q\le
P}\mathcal R_\Qq^*(N_P(Q)/Q)&\rTo^{\tilde d_P}& \tilde
H^0(\mathcal A_{\ge 2}(P), p\Zz)^P&\rTo&0
\end{diagram}
\end{displaymath}
where the maps $r_P$ and $\tilde d_P$ are as defined above.
\end{thm}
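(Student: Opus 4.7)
The plan is to exploit the embedding $\mathcal R_\Qq^*(P)\hookrightarrow B^*(P)$ given by the Ritter--Segal linearization map, combined with Lemma \ref{lem:buya-clss}, which picks out $\mathcal R_\Qq^*(P)$ inside $B^*(P)$ as the super class functions satisfying condition $(*)$ on all rank-$2$ subquotients $T/S$. Since Theorem \ref{thm:exaBurn} solves the gluing problem for $B^*$, every gluing data $(f_Q)_{1<Q\le P}$ for $\mathcal R_\Qq^*$ lifts to some $f\in B^*(P)$, and by Lemma \ref{lem:myClss} one only needs to measure the failure of $(*)$ on rank-$2$ elementary abelian subgroups $E\le P$. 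For such $E$ the condition $(*)$ determines the single undetermined value $f(P/1)$ in terms of quantities fixed by the gluing data; the obstruction is whether this determination is consistent across the conjugacy classes of maximal rank-$2$ elementary abelian subgroups, and $\tilde d_P$ is precisely the record of this inconsistency.

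First I would verify that $\tilde d_P$ lands in $\tilde H^0(\mathcal A_{\ge 2}(P),\Zz)^P$. Conjugation invariance of the data gives $P$-invariance, while constancy on the big component reduces, via Lemma 2.1 of \cite{CaThClssTri}, to showing $\tilde f(T) = \tilde f(S)$ for any two rank-$2$ subgroups $T,S$ of a common maximal elementary abelian $E$. For this I would restrict the data to $E$ and invoke Lemma \ref{lem:elemAbel}: the restriction admits a solution which, $E$ being elementary abelian, is automatically a rational class function, forcing $(*)$ inside $E$ and hence $\tilde f(T) = \tilde f(S)$. For arbitrary $T,S$ in the big component, the product $TS$ supplies a common upper bound and the argument cascades.

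Next I would show that the image actually lies in $\tilde H^0(\mathcal A_{\ge 2}(P), p\Zz)^P$ and that this refinement is onto. Using the structural facts recalled just before the theorem---a central subgroup $Z$ of order $p$ contained in every maximal rank-$2$ elementary abelian subgroup, plus $P$-conjugacy of the remaining order-$p$ subgroups of such an $E$ (or of the characteristic $E_0$ when $P$ has rank $\ge 3$, by \cite{Maz} and \cite{GlaMaz})---the sum $\sum_{1<X<E}f(P/X) - pf(P/E)$ collapses modulo constants to $p(f(P/Y) - f(P/E))$ for a non-central $Y$, which is manifestly in $p\Zz$. For surjectivity, given a conjugacy class of maximal rank-$2$ elementary abelian $E$, I would build a gluing data concentrated on the $P$-conjugacy class of a non-central order-$p$ $Y\le E$: since $N_P(Y)/Y$ is cyclic or generalized quaternion by \cite[Lemma 3.1]{BoucThe-glue}, its single nonzero component is automatically a rational class function, and a direct check against the definition gives $\tilde d_P$-image equal to $p\chi_E$.

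Exactness in the middle then follows from a short calculation: for a genuine rational class function $f$, condition $(*)$ gives $p(f(P/Y) - f(P/E)) = f(P/1) - f(P/Z)$, independent of $E$, so $\tilde d_P r_P(f) = 0$; conversely a data in $\ker \tilde d_P$ admits a $B^*$-solution whose undetermined value at the trivial subgroup can be chosen to satisfy $(*)$ uniformly, producing a rational class function preimage. Injectivity of $r_P$ is then immediate, since a rational class function $f$ killed by $r_P$ vanishes at every nontrivial subgroup, and $(*)$ forces $f(P/1)=0$. The main obstacle I expect is the $p$-divisibility step: this is the only place where the delicate structure of $\mathcal A_{\ge 2}(P)$---existence of $Z$ and of a characteristic $E_0$, separately in the odd and $2$-primary cases---is used in an essential way, and without those group-theoretic inputs there is no a priori reason for the factor of $p$ to appear.
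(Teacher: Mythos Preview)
Your proposal is correct and follows essentially the same route as the paper: the paper's argument (developed in the paragraphs preceding the theorem and in the lemma on well-definedness of $\tilde d_P$) uses exactly the embedding via Ritter--Segal together with Lemmas \ref{lem:buya-clss} and \ref{lem:myClss}, establishes well-definedness of $\tilde d_P$ by restricting to a maximal $E$ in the big component and invoking Lemma \ref{lem:elemAbel}, proves $p$-divisibility and surjectivity using the central $Z$ and the conjugacy of the non-central order-$p$ subgroups (with $N_P(Y)/Y$ cyclic or generalized quaternion), and handles exactness in the middle and injectivity of $r_P$ via the identity $p(f(P/Y)-f(P/E)) = f(P/1)-f(P/Z)$. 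Your outline matches this in both structure and detail.
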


Note that when the poset $\mathcal A_{\ge 2}(P)$ is connected, the
group $\tilde H^0(\mathcal A_{\ge 2}(P), p\Zz)^P$ becomes the
trivial group and we get that the gluing problem always has a
unique solution, which proves the following corollary.
\begin{cor}
Let $P$ be a $p$-group of rank at least $2$ such that the poset
$\mathcal A_{\ge 2}(P)$ is connected. Then the map
\[
r_P^{\mathcal R_\Qq^*}: \mathcal R_\Qq^*(P) \ra
\varprojlim_{1<Q\le P}\mathcal R_\Qq^*(N_P(Q)/Q)
\]
is an isomorphism.
\end{cor}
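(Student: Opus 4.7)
The plan is to deduce the corollary as an immediate consequence of Theorem \ref{thm:glueRat}, the main work being to verify that the obstruction group $\tilde H^0(\mathcal A_{\ge 2}(P), p\Zz)^P$ vanishes under the connectedness hypothesis.

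First, I would invoke Theorem \ref{thm:glueRat}, which gives the exact sequence
\begin{displaymath}
\begin{diagram}
0&\rTo&\mathcal R_\Qq^*(P) &\rTo^{r_P} &\varprojlim_{1<Q\le P}\mathcal R_\Qq^*(N_P(Q)/Q)&\rTo^{\tilde d_P}& \tilde H^0(\mathcal A_{\ge 2}(P), p\Zz)^P&\rTo&0.
\end{diagram}
\end{displaymath}
Thus it suffices to show that the rightmost term vanishes when $\mathcal A_{\ge 2}(P)$ is connected, which will force $r_P$ to be both injective and surjective, hence an isomorphism.

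Next I would recall the definition of $\tilde H^0(\mathcal A_{\ge 2}(P), p\Zz)^P$: it consists of $P$-invariant functions $f : \mathcal A_{\ge 2}(P) \to p\Zz$ which satisfy $f(E) = f(F)$ whenever $E \le F$, taken modulo constant functions. If $\mathcal A_{\ge 2}(P)$ is connected, then for any two elementary abelian subgroups $E, F$ of rank at least $2$, there is a chain $E = E_0, E_1, \dots, E_n = F$ in $\mathcal A_{\ge 2}(P)$ in which consecutive terms are comparable. Applying the edge condition repeatedly along such a chain yields $f(E) = f(F)$, so any such $f$ is globally constant on $\mathcal A_{\ge 2}(P)$. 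Hence $f$ represents the zero class in the quotient by constants, and the group $\tilde H^0(\mathcal A_{\ge 2}(P), p\Zz)^P$ is trivial.

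Substituting this vanishing into the exact sequence of Theorem \ref{thm:glueRat} leaves the short exact sequence
\begin{displaymath}
\begin{diagram}
0&\rTo&\mathcal R_\Qq^*(P) &\rTo^{r_P} &\varprojlim_{1<Q\le P}\mathcal R_\Qq^*(N_P(Q)/Q)&\rTo&0,
\end{diagram}
\end{displaymath}
which is exactly the statement that $r_P^{\mathcal R_\Qq^*}$ is an isomorphism. There is no real obstacle here: the entire content of the corollary is contained in Theorem \ref{thm:glueRat} together with the elementary observation above about the vanishing of $\tilde H^0$ under connectedness.
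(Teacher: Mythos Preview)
Your argument is correct and matches the paper's approach exactly: the corollary is deduced immediately from Theorem \ref{thm:glueRat} together with the observation that $\tilde H^0(\mathcal A_{\ge 2}(P), p\Zz)^P$ vanishes when $\mathcal A_{\ge 2}(P)$ is connected. Your chain argument making this vanishing explicit is a welcome elaboration of what the paper states in a single sentence.
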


Finally, the following corollary is an easy consequence of the
above proof, completing the solution of the gluing problem for
rational class functions for $p$-groups.
\begin{cor}
Let $P$ be a $p$-group of rank $1$. Then there is an exact
sequence
\begin{displaymath}
\begin{diagram}
0& \rTo & \Zz & \rTo & \mathcal R_\Qq^*(P) & \rTo^{r_P} &
\varprojlim_{1<Q\le P}\mathcal R_\Qq^*(N_P(Q)/Q)& \rTo&0
\end{diagram}
\end{displaymath}
of abelian groups.
\end{cor}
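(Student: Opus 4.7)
The plan is to reduce the statement to Theorem \ref{thm:exaBurn} by showing that, for a $p$-group of rank $1$, the functor $\mathcal R_\Qq^*$ coincides with $B^*$ everywhere that matters in the gluing diagram.

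First I would observe that a $p$-group $P$ of rank $1$ is either cyclic or generalized quaternion, and therefore contains no subquotient isomorphic to $C_p\times C_p$. By Lemma \ref{lem:buya-clss} the compatibility condition $(*)$ is then vacuous on $P$, so every super class function on $P$ is already rational. Via the inclusion of biset functors $\mathcal R_\Qq^*\hookrightarrow B^*$ coming from the transpose of the Ritter--Segal linearization map, this gives the equality $\mathcal R_\Qq^*(P)=B^*(P)$.

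Next I would extend this identification to all the evaluations appearing on the right-hand side. For any non-trivial subgroup $Q\le P$, the section $N_P(Q)/Q$ is itself a subquotient of $P$, hence again a $p$-group of rank at most $1$, and the same argument yields $\mathcal R_\Qq^*(N_P(Q)/Q)=B^*(N_P(Q)/Q)$. Consequently
\[
\varprojlim_{1<Q\le P}\mathcal R_\Qq^*(N_P(Q)/Q)=\varprojlim_{1<Q\le P}B^*(N_P(Q)/Q),
\]
and, under these identifications, the restriction map $r_P^{\mathcal R_\Qq^*}$ coincides with $r_P^{B^*}$, since both are induced by the same destriction maps on super class functions.

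The claimed exact sequence now reads off directly from Theorem \ref{thm:exaBurn} applied with $G=P$. There is no substantive obstacle; the only point worth explicit mention is the rank-stability of subquotients under $Q\mapsto N_P(Q)/Q$, which is immediate since any subquotient of a rank-$1$ $p$-group again has rank at most $1$.
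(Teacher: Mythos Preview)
Your proposal is correct and follows exactly the line the paper indicates: for a $p$-group of rank $1$ the condition of Lemma \ref{lem:buya-clss} is vacuous, so $\mathcal R_\Qq^*=B^*$ on $P$ and on every section $N_P(Q)/Q$, and the exact sequence is just Theorem \ref{thm:exaBurn}. You have in fact written out more detail than the paper, which merely asserts the corollary as an easy consequence of the preceding discussion.
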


\section{Gluing Borel-Smith functions}\label{sec:gluingBS}

Let $p$ be a prime number and $P$ be a $p$-group. As in Section
\ref{sec:gluingSuperClss}, let $B^*(P)$ denotes the group of super
class functions for $P$. Following tom Dieck \cite{tomDie}, we
call a super class function $f\in B^*(P)$ a \emph{Borel-Smith
function} if the following conditions are satisfied:
\begin{enumerate}
  \item If $p$ is odd, then for any subquotient $T/S$ of $P$, of order $p$, the number $f(T) - f(S)$ is even.
  \item If $p=2$, then for any sequence of subgroups $H\unlhd K\unlhd L\le N_P(H)$, with $|K:H| = 2$, the value $f(K) - f(H)$ is even if $L/K$ is cyclic of order $4$ and
    is divisible by $4$ if $L/K$ is quaternion of order $8$.
  \item For any elementary abelian subquotient $T/S$ of $P$, of rank 2, the equality \[ f(S) - f(T) = \sum_{S<X<T} (f(X) -f(T)) \] holds.
\end{enumerate}
It is proved in \cite{Bouc-yalcin} that the subgroups $C_b(Q)$ of
Borel-Smith functions as $Q$ runs over all $p$-groups together
with the induced actions of bisets is a subfunctor of the biset
functor $B^*$.

To solve the gluing problem for Borel-Smith functions, we
introduce the following group. Define the group
$$\tilde H^0_b(\mathcal A_{\ge 2}(P), p\Zz)^P$$ as the subgroup of $\tilde H^0(\mathcal A_{\ge 2}(P), p\Zz)^P$ generated
by the elements $pn_E\chi_E$ where $E$ runs over a complete set of
representatives of conjugacy classes of maximal elementary abelian
subgroups of $P$ of rank $2$ and for such a subgroup $E$, the
integer $n_E$ is given by
\[
n_E= \left\{
       \begin{array}{ll}
         1, & \hbox{if $|C_P(Y):Y| = 2$;} \\
         2, & \hbox{if $C_P(Y)/Y$ is cyclic of order $\ge 3$;}\\
         4, & \hbox{if $C_P(Y)/Y$ is generalized quaternion}
       \end{array}
     \right.
\]
where $Y$ is a non-central proper subgroup of $E$ and recall that
by Lemma 2.2 in \cite{CaThClssTri}, the group $C_P(Y)/Y$ is either
cyclic or generalized quaternion.

Now we define a map
\[
\tilde d_P:  \varprojlim_{1<Q\le P}C_b(N_P(Q)/Q)\ra \tilde
H^0_b(\mathcal A_{\ge 2}(P), p\Zz)^P
\]
by associating a given sequence $(b_Q)_{1<Q\le P}$ to the class of
the function $E\mapsto p(b_S(W_P(S)/(S/S))-b_S(W_P(S)/(E/S)))$
where $S$ is a non-central proper subgroup of $E$, and $E$ is
chosen as in the case of the rational class functions. With this
notation, we can prove the following theorem.
\begin{thm}\label{thm:glueBS} Let $P$ be a $p$-group of rank at least $2$. There is an exact sequence of abelian groups
\begin{displaymath}
\begin{diagram}
0&\rTo&C_b(P) &\rTo^{r_P} &\varprojlim_{1<Q\le
P}C_b(N_P(Q)/Q)&\rTo^{\tilde d_P}& \tilde H^0_b(\mathcal A_{\ge
2}(P), p\Zz)^P&\rTo&0
\end{diagram}
\end{displaymath}
where the maps $r_P$ and $\tilde d_P$ are as defined above.
\end{thm}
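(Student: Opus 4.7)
The plan is to follow the strategy of Theorem \ref{thm:glueRat} almost verbatim, and to track how the additional Borel--Smith conditions~(1) and~(2) sharpen the divisibility appearing at rank~$2$ elementary abelian sections. Since $C_b$ is a subfunctor of $\mathcal R_\Qq^*$ by \cite{Bouc-yalcin}, every gluing datum for $C_b(P)$ is in particular a gluing datum for $\mathcal R_\Qq^*(P)$, and Borel--Smith condition~(3) is exactly the rank~$2$ elementary abelian condition of Lemma \ref{lem:buya-clss}. Thus Theorem \ref{thm:glueRat} already furnishes a rational class function lift modulo an obstruction in $\tilde H^0(\mathcal A_{\ge 2}(P), p\Zz)^P$, and the remaining task is to identify which part of this obstruction actually persists, and when the rational lift can be chosen to be Borel--Smith.

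For injectivity of $r_P$, if $f \in C_b(P)$ lies in $\ker r_P$, then $f$ vanishes at every nontrivial subgroup and condition~(3) applied to any rank~$2$ elementary abelian $E\le P$ forces $f(1)=0$. For the image of $\tilde d_P$, the value $\tilde d_P((b_Q))(E)=p\bigl(b_S(1)-b_S(E/S)\bigr)$ is a multiple of $p$ by construction, and the refinement to a multiple of $pn_E$ follows from applying the Borel--Smith hypotheses to $b_S\in C_b(W_P(S))$: since $E/S$ has order $p$ in $W_P(S)$, for $p$ odd condition~(1) yields the factor $2$, and for $p=2$ condition~(2) applied with $\bar H=1$, $\bar K=E/S$, and $\bar L$ running over chains inside $C_P(S)/S$ distinguishes the three cases $n_E\in\{1,2,4\}$, in accordance with the structure of $C_P(Y)/Y$ recorded in \cite[Lemma 2.2]{CaThClssTri}. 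Well-definedness of $\tilde d_P((b_Q))$ modulo constants on the big component of $\mathcal A_{\ge 2}(P)$ is checked as in the rational case, invoking Lemma \ref{lem:elemAbel} applied to a maximal element of the big component.

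For exactness at the middle term, given $(b_Q)\in\ker\tilde d_P$, Theorem \ref{thm:glueRat} yields a rational class function lift $f\in\mathcal R_\Qq^*(P)$ whose value $f(1)$ is pinned down by the now-constant function $E\mapsto p\bigl(b_S(1)-b_S(E/S)\bigr)$. I would then verify that $f$ is actually Borel--Smith: conditions~(1)--(3) at subquotients $T/S$ with $S\ne 1$ are inherited directly from $b_S\in C_b(W_P(S))$ by destriction invariance, while at subquotients involving the trivial subgroup the required parity or divisibility of $f(1)-f(T)$ is exactly what membership in $\ker\tilde d_P$ encodes. Concretely, each subgroup $T$ of order $p$ that is relevant lies in some rank~$2$ elementary abelian $E$, and using condition~(3) for $f$ (already in hand since $f$ is rational) one rewrites $f(1)-f(T)$ as a sum of differences $b_X(1)-b_X(E/X)$, each of which is Borel--Smith controlled on $W_P(X)$.

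Finally, for surjectivity of $\tilde d_P$ I would proceed as in the rational case: for each $P$-conjugacy class of maximal rank~$2$ elementary abelian $E$ with fixed non-central $Y\le E$, construct a gluing datum $(b_Q)$ supported on the $P$-class of $Y$ and mapping to $pn_E\chi_E$. The main technical obstacle is the explicit construction of the component $b_Y\in C_b(W_P(Y))$ realising the minimal admissible jump $b_Y(1)-b_Y(E/Y)=n_E$ on the cyclic or generalized quaternion group $W_P(Y)$; here $n_E$ is by design the minimum achievable by any Borel--Smith function across the unique order-$p$ subgroup inside $E/Y$, so the candidate is essentially forced, but one must still check that extending $b_Q$ by zero off the $P$-class of $Y$ produces a destriction-invariant sequence. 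This last step relies on the vanishing of $b_Q$ outside that class together with the structure of normalizers from \cite[Lemma 3.1]{BoucThe-glue}, and is where the proof requires the most care.
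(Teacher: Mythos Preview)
Your approach is correct and matches the paper's own proof, which is terse and largely defers to the argument for Theorem \ref{thm:glueRat}. In fact you supply more justification than the paper does: the verification that $\tilde d_P$ actually lands in $\tilde H^0_b(\mathcal A_{\ge 2}(P), p\Zz)^P$ via the Borel--Smith divisibility of $b_S(1)-b_S(E/S)$ is only implicit in the paper, and your treatment of exactness in the middle is considerably more explicit.

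One small simplification for the surjectivity step, where you flag the destriction-invariance check as ``where the proof requires the most care'': the paper avoids this by reusing the rational construction wholesale. Since the sequence $(f_Q)_{1<Q\le P}$ with $f_Q = n_E\delta_{T,Q}$ on the $P$-class of $Y$ and zero elsewhere is already known from Theorem \ref{thm:glueRat} to be a gluing datum for $\mathcal R_\Qq^*$, destriction invariance is automatic; the only new thing to check is that each $f_Q$ is Borel--Smith, which is immediate because $W_P(Y)$ is cyclic or generalized quaternion and $n_E$ was chosen precisely to satisfy condition~(1) or~(2) across its unique subgroup of order $p$. So the ``obstacle'' you identify dissolves once you piggyback on the rational case rather than building the datum from scratch.
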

\begin{proof}
The proof of the injectivity of $r_P$ is similar to the proof of
the injectivity of $r_P^{\mathcal R_\Qq^*}$. To prove that $\tilde
d_P$ is surjective, note that we can embed $\tilde H^0_b(\mathcal
A_{\ge 2}(P),p\Zz)^P$ into $\tilde H^0(\mathcal A_{\ge
2}(P),p\Zz)^P$. Now if $n_E\chi_E$ is the characteristic function
of the class of the maximal elementary abelian subgroup $E$ of $P$
of rank $2$, then we regard $pn_E\chi_E$ as a rational class
function and using the surjectivity of $\tilde d_P^{\mathcal
R_\Qq^*}$, we construct the pre-image $(f_Q)_{1<Q\le P}$ of
$n_E\chi_E$ in $\varprojlim_{1<Q\le P}\mathcal R_\Qq^*(N_P(Q)/Q)$.
Now this class is a gluing data for Borel-Smith functions since
$f_Q$ is non-zero only if $Q$ is $P$-conjugate to a non-central
proper subgroup of $E$ in which case it is either $1$ or $2$ or
$4$ according to whether $C_P(Q)/Q$ is cyclic of order $2$ or
cyclic of order at least $3$ or generalized quaternion and in this
case it is clear that the function $f_Q$ is a Borel-Smith
function, proving the surjectivity of the map $\tilde d_P$.

Finally, the proof of the exactness at the middle term of the
sequence is similar to the proof of the exactness at the middle
term of the sequence of Theorem \ref{thm:glueRat}.
\end{proof}

As in the previous case for rational class functions, if the poset
$\mathcal A_{\ge 2}(P)$ is connected, the group $H^0_b(\mathcal
A_{\ge 2}(P),p\Zz)^P$ vanishes and we obtain the following
corollary.
\begin{cor}
Let $P$ be a $p$-group such that the poset $\mathcal A_{\ge 2}(P)$
is connected. Then the map
\[
r_P^{C_b}: C_b(P) \ra \varprojlim_{1<Q\le P} C_b(N_P(Q)/Q)
\]
is an isomorphism.
\end{cor}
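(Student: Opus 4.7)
The plan is to deduce the corollary directly from Theorem \ref{thm:glueBS} by observing that the obstruction term vanishes under the connectedness hypothesis. Theorem \ref{thm:glueBS} supplies the exact sequence
\[
0 \to C_b(P) \to \varprojlim_{1<Q\le P} C_b(N_P(Q)/Q) \to \tilde H^0_b(\mathcal A_{\ge 2}(P), p\Zz)^P \to 0,
\]
in which the first map is $r_P^{C_b}$. Consequently, proving the corollary reduces to the single claim that $\tilde H^0_b(\mathcal A_{\ge 2}(P), p\Zz)^P$ is the trivial group whenever $\mathcal A_{\ge 2}(P)$ is connected; injectivity of $r_P^{C_b}$ is already built into the exact sequence.

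To establish this vanishing, I would use that $\tilde H^0_b(\mathcal A_{\ge 2}(P), p\Zz)^P$ is by construction a subgroup of the ambient group $\tilde H^0(\mathcal A_{\ge 2}(P), p\Zz)^P$, so it suffices to kill the latter. Unwinding the definition, an element of $\tilde H^0(\mathcal A_{\ge 2}(P), p\Zz)^P$ is represented by a $P$-invariant function $f\colon \mathcal A_{\ge 2}(P)\to p\Zz$ with $f(E)=f(F)$ whenever $E\le F$, taken modulo constant functions. The comparability condition forces $f$ to be constant along each connected component of the poset; when $\mathcal A_{\ge 2}(P)$ is connected, $f$ is globally constant and hence zero in the quotient by constants. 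This is essentially the same mechanism used in \cite[Lemma 3.1]{BoucThe-glue} to identify the generators of $\tilde H^0$ with the isolated vertices of $\mathcal A_{\ge 2}(P)$.

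There is no genuine obstacle in the argument, as the whole substance has already been absorbed into Theorem \ref{thm:glueBS}. The one point that deserves a brief sanity check is the hypothesis on the rank of $P$: Theorem \ref{thm:glueBS} is stated for $p$-groups of rank at least $2$, and for $P$ of rank $\le 1$ the poset $\mathcal A_{\ge 2}(P)$ is empty, which puts us outside the setting of the corollary (the corresponding exact sequence for rank $1$ is the separate one recorded in the introduction). With $P$ understood to be of rank at least $2$, the corollary follows at once.
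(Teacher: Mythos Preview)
Your proposal is correct and follows essentially the same approach as the paper: the text preceding the corollary simply notes that when $\mathcal A_{\ge 2}(P)$ is connected the group $\tilde H^0_b(\mathcal A_{\ge 2}(P),p\Zz)^P$ vanishes, so the exact sequence of Theorem~\ref{thm:glueBS} collapses to the desired isomorphism. Your additional remarks (spelling out why connectedness forces the ambient $\tilde H^0$ to vanish, and flagging the implicit rank~$\ge 2$ hypothesis) are accurate elaborations of what the paper leaves implicit.
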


Finally, the following corollary completes the solution of the
gluing problem for Borel-Smith functions for $p$-groups. We leave
the straightforward proof as an exercise.
\begin{cor}
Let $P$ be a $p$-group of rank $1$. Then there is an exact
sequence
\begin{displaymath}
\begin{diagram}
0& \rTo & \Zz & \rTo & C_b(P) & \rTo^{r_P} & \varprojlim_{1<Q\le
P}C_b(N_P(Q)/Q)& \rTo& \Zz/m_P\Zz&\rTo&0
\end{diagram}
\end{displaymath}
of abelian groups where $m_P = 2$ if $P$ is cyclic and $m_P = 4$
if $P$ is generalized quaternion.
\end{cor}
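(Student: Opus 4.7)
The plan is to mirror the structure of Theorem \ref{thm:glueBS} but adapt it to the rank $1$ situation, where the poset $\mathcal A_{\ge 2}(P)$ is empty and so the obstruction group appearing in the rank $\ge 2$ case degenerates. I would argue by direct inspection of the two possibilities, $P$ cyclic and $P$ generalized quaternion, following the template of the proofs of Theorems \ref{thm:exaBurn}, \ref{thm:glueRat}, and \ref{thm:glueBS}.

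For the kernel, the plan is to run the argument of Theorem \ref{thm:exaBurn}. If $f \in C_b(P)$ lies in the kernel of $r_P$, then $f$ vanishes on every non-trivial subgroup of $P$. For rank $1$ groups, condition (3) of the Borel-Smith conditions is vacuous, so the only possible constraint on $f(1)$ comes from condition (1) (for $p$ odd) or from condition (2) (for $p=2$), applied with $K$ the unique minimal non-trivial subgroup of $P$ (a subgroup of order $p$, equal to $Z(P)$ in the generalized quaternion case) and $H = 1$. Either way this leaves $f(1)$ free in a subgroup isomorphic to $\Zz$, identifying the kernel with $\Zz$ and giving the injective map on the left.

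For the middle exactness and the surjection onto $\Zz/m_P\Zz$, I would combine Theorem \ref{thm:exaBurn} with a case-by-case lifting analysis. Given a gluing datum $(b_Q)_{1<Q\le P}$ of Borel-Smith functions, Theorem \ref{thm:exaBurn} produces a super class function $f\in B^*(P)$ whose destrictions agree with the $b_Q$'s on the non-trivial subgroups, with $f(1)$ free. The Borel-Smith conditions for $f$ that involve only non-trivial subgroups are inherited from the individual $b_Q$'s via destriction invariance; the conditions involving $f(1)$ fix its residue modulo $2$ and can always be arranged. What remains is a single Borel-Smith condition at the top of the subgroup lattice of $P$ which is not forced by the $b_Q$'s alone, and whose failure modulo $m_P$ defines the obstruction class.

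The hard part will be pinning down the correct missing Borel-Smith condition in each case. For $P$ cyclic I expect it to be a parity condition yielding $\Zz/2\Zz$, while for $P$ generalized quaternion the divisibility-by-$4$ clause of condition (2) (for $L/K$ quaternion of order $8$) should enter through the subgroup lattice of $P$ and produce $\Zz/4\Zz$; these values match the coefficients $n_E = 2, 4$ appearing in the definition of $\tilde H^0_b(\mathcal A_{\ge 2}(P),p\Zz)^P$ from the rank $\ge 2$ case. Once the missing condition is identified, building a pre-image of a generator of $\Zz/m_P\Zz$ from a gluing datum supported near the relevant maximal subgroup of $P$ is routine and completes the exactness of the sequence.
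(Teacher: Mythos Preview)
The paper leaves this corollary as an exercise, so there is no proof to compare against. Your kernel analysis is essentially correct: since a $p$-group $P$ of rank $1$ has a unique minimal non-trivial subgroup $Z$, any $f\in\ker(r_P)$ vanishes on every non-trivial subgroup, and the Borel--Smith constraints at the pair $(1,Z)$ cut out an infinite cyclic group of allowed values $f(1)$.

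The gap lies in your cokernel analysis. Because $Z$ is contained in every non-trivial subgroup of $P$, the compatibility conditions force a gluing datum $(b_Q)_{1<Q\le P}$ to be determined by $b_Z\in C_b(P/Z)$, and conversely every element of $C_b(P/Z)$ extends to a gluing datum; thus $\varprojlim_{1<Q\le P}C_b(N_P(Q)/Q)\cong C_b(P/Z)$ and $r_P$ is identified with $\defl^P_{P/Z}$. But $\defl^P_{P/Z}$ is split by $\infl^P_{P/Z}$ for any biset functor, hence $r_P$ is surjective and $\obs(C_b(P))=0$. Concretely, there is no ``single Borel--Smith condition at the top of the subgroup lattice which is not forced by the $b_Q$'s'': every Borel--Smith condition on a lift $f\in B^*(P)$ either involves only non-trivial subgroups (and is inherited from the $b_Q$) or involves $f(1)$ (and can be satisfied by choosing $f(1)$ appropriately). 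Your instinct that pinning down the missing condition is ``the hard part'' was well founded---the resolution is that no such condition exists, so the cokernel $\Zz/m_P\Zz$ asserted in the corollary appears to be misstated. A direct check for $P=C_p$ or $P=Q_8$ confirms this.
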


\section{The Main Result}\label{sec:mainResults}

The main result of the paper is the following theorem which
relates the endo-trivial group to the group of obstructions of the
gluing problem for the functor of Borel-Smith functions. As an
immediate corollary, we will obtain a set of generators for the
group $T(P)$ of endo-trivial modules.

\begin{thm}\label{thm:trivExa} Let $p$ be a prime number and $P$ be a $p$-group, which is not cyclic, quaternion or
semi-dihedral. There is a split exact sequence of abelian groups
\begin{displaymath}
\begin{diagram}
0&\rTo&\Zz &\rTo &T(P)&\rTo& \obs(C_b(P))&\rTo&0
\end{diagram}
\end{displaymath}
where $T(P)$ is the group of endo-trivial modules and
$\obs(C_b(P))$ is the group of obstructions for the gluing problem
at $P$ for the biset functor of Borel-Smith functions.
\end{thm}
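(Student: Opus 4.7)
The plan is to apply Proposition~\ref{prop:commDiagGen} to the first Bouc-Yal\c{c}\i n short exact sequence
\[
0 \to C_b \to B^* \to D^\Omega \to 0
\]
evaluated at $P$ and then invoke the snake lemma. The hypothesis on $P$ forces its rank to be at least $2$, so Theorem~\ref{thm:glueBS} gives $\ker r_P^{C_b}=0$, while Theorem~\ref{thm:exaBurn} gives both $\ker r_P^{B^*}\cong\Zz$ and $\obs(B^*(P))=0$. Substituting these inputs into the six-term snake sequence collapses it to
\[
0 \to \Zz \to \ker r_P^{D^\Omega} \to \obs(C_b(P)) \to 0.
\]

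The next step is to identify the middle term with $T(P)$. Puig's theorem, cited in the introduction, states that the kernel of $D(P)\to\varprojlim_{1<Q\le P} D(W_P(Q))$ is exactly $T(P)$, whence $\ker r_P^{D^\Omega}=T(P)\cap D^\Omega(P)$. The Carlson-Th\'{e}venaz classification (\cite{CaThClssTri}, \cite{CaThClssTorTri}) tells us that for $P$ not cyclic, quaternion, or semi-dihedral, $T(P)$ is torsion-free and generated by Alperin's relative syzygies; these all lie in $D^\Omega(P)$ by definition. Hence $T(P)\subseteq D^\Omega(P)$, the middle term is $T(P)$, and the asserted exact sequence follows.

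To produce the splitting, I would run the same diagram chase with the second Bouc-Yal\c{c}\i n sequence $0\to C_b\to\mathcal R_\Qq^*\to D_t^\Omega\to 0$ of Theorem 1.3 of \cite{Bouc-yalcin}. By Theorem~\ref{thm:glueRat} the map $r_P^{\mathcal R_\Qq^*}$ is injective, and since $D_t^\Omega\subseteq D^\Omega$ is torsion while $T(P)$ is torsion-free, one has $\ker r_P^{D_t^\Omega}=T(P)\cap D_t^\Omega(P)=0$. The snake lemma then yields an injection $\obs(C_b(P))\hookrightarrow\obs(\mathcal R_\Qq^*(P))=\tilde H^0(\mathcal A_{\ge 2}(P),p\Zz)^P$. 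The target is a subgroup of the finitely generated free abelian group generated by the characteristic functions of conjugacy classes of maximal rank-$2$ elementary abelian subgroups, hence is itself free abelian; therefore so is $\obs(C_b(P))$, and any extension of a free abelian group by $\Zz$ splits.

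The principal obstacle is the identification $\ker r_P^{D^\Omega}=T(P)$: Puig's theorem only describes the kernel inside the ambient Dade group $D(P)$, so one genuinely needs the Carlson-Th\'{e}venaz containment $T(P)\subseteq D^\Omega(P)$, which is available precisely for the class of $p$-groups specified in the hypothesis. Once this containment is granted, the remainder is a formal snake-lemma chase combined with the gluing theorems of Sections~\ref{sec:gluingSuperClss}, \ref{sec:gluingRatclss}, and \ref{sec:gluingBS}.
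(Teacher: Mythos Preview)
Your derivation of the exact sequence follows the paper's route exactly: apply Proposition~\ref{prop:commDiagGen} to the first Bouc--Yal\c{c}\i n sequence and read off the snake lemma with the input from Theorem~\ref{thm:exaBurn}. The one substantive divergence is in justifying $T(P)\cap D^\Omega(P)=T(P)$. You invoke the full Carlson--Th\'evenaz result from \cite{CaThClssTri} (that $T(P)$ is generated by Alperin's relative syzygies) to conclude $T(P)\subseteq D^\Omega(P)$; but that statement is precisely what the paper rederives as a corollary of this theorem, so quoting it here is circular relative to the paper's declared aims. The paper instead combines Bouc's Theorem~7.7 in \cite{Bouc-DadeGroup}, asserting that every class in $D(P)\setminus D^\Omega(P)$ has finite order, with the torsion-freeness of $T(P)$ from \cite{CaThClssTorTri}; this yields the same containment without touching \cite{CaThClssTri}. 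A minor second difference: you obtain $\ker r_P^{C_b}=0$ directly from Theorem~\ref{thm:glueBS}, whereas the paper deduces it a~posteriori from the injectivity of $\Zz\to T(P)$, $\omega_P\mapsto\Omega_P$.

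Your splitting argument is genuinely different and more economical as a pure existence proof: via the second Bouc--Yal\c{c}\i n sequence (or already via Theorem~\ref{thm:glueBS}) you embed $\obs(C_b(P))$ in the free abelian group $\tilde H^0(\mathcal A_{\ge 2}(P),p\Zz)^P$, conclude it is free, and hence that the extension splits abstractly. The paper instead builds an explicit section $\sigma\colon \tilde H^0_b(\mathcal A_{\ge 2}(P),p\Zz)^P\to T(P)$ by lifting each generator $pn_E\chi_E$ through the two commutative diagrams to the class $[(\Omega_{P/A}\otimes\Omega_{P/1}^{-1})^{\otimes n_E}]$. Your abstract argument suffices for the theorem as stated, but the paper's explicit lift is exactly what produces the basis of $T(P)$ recorded in the subsequent corollary, so it carries strictly more information.
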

\begin{proof}
The proof consists of two parts. The first part is the proof of
the existence of the exact sequence and the second part is the
construction of an explicit splitting.

For the first part of the proof, recall that, by Theorem 1.2 of
\cite{Bouc-yalcin}, there is an exact sequence of $p$-biset
functors
\begin{displaymath}
\begin{diagram}
0&\rTo&C_b &\rTo &B^*&\rTo^{\Psi}& D^\Omega &\rTo&0.
\end{diagram}
\end{displaymath}
Here the map $\Psi$ is defined  as the family $(\Psi_P)$ where
$\Psi_P$ maps an element $\omega_{P/Q}$ of the basis $\{
\omega_{P/Q}| Q\le_P P\}$ of $B^*(P)$, introduced in Lemma 2.2 of
\cite{Bouc-DadeGroup}, to the corresponding relative syzygy
$\Omega_{P/Q}$.

Now by Proposition \ref{prop:commDiagGen}, there is a commutative
diagram

\begin{displaymath}
\begin{diagram}
0&\rTo&C_b(P) &\rTo &B^*(P)&\rTo^{\Psi_P}&D^\Omega(P)&\rTo&0\\
 & &\dTo^{r_P^{C_b}} & & \dTo^{r_P^{B^*}}&&\dTo^{r_P^{D}}&&\\
0&\rTo&\varprojlim_{1<Q\le P}C_b(W_P(Q)) &\rTo
&\varprojlim_{1<Q\le
P}B^*(W_P(Q))&\rTo^{\tilde\Psi_P}&\varprojlim_{1<Q\le P}
D^\Omega(W_P(Q))
\end{diagram}
\end{displaymath}
where the maps $\tilde\Psi_P$ and $r_P$'s are defined as in
Section \ref{sec:gluingGeneral}. Now by Snake Lemma together with
Theorem \ref{thm:exaBurn}, we obtain an exact sequence
\begin{displaymath}
\begin{diagram}
0&\rTo&Ker(r_P^{C_b}) &\rTo &\Zz&\rTo&
T^\Omega(P)&\rTo&\obs(C_b(P))&\rTo&0
\end{diagram}
\end{displaymath}
where $T^\Omega(P)$ denotes the kernel of $r_P^{D^\Omega}$. Now we
claim that the equality
\[
T^\Omega(P) = T(P)
\]
holds. Indeed, by \cite[Section 2.1.1]{Puig}, the kernel of
$r_P^D$ is equal to the group $T(P)$ and hence we have
\[
T^\Omega(P) = T(P)\cap D^\Omega(P).
\]
But by Theorem 7.7 in \cite{Bouc-DadeGroup}, any class in the Dade
group which is not equal to a class of relative syzygy has finite
order, and by Theorem 1.1 in \cite{CaThClssTorTri}, the group
$T(P)$ is torsion free if $P$ is not cyclic, generalized
quaternion or semi-dihedral. Thence, we have $T^\Omega(P) = T(P)$.

On the other hand, by the proof of Theorem \ref{thm:exaBurn}, the
group $\Zz$, in the above sequence, can be identified with the
subgroup of $B^*(P)$ generated by $\omega_P$. Now by the
definition of the map $\Psi$, the image of $\omega_P$ is equal to
$\Omega_P$. Moreover it is well-known that the subgroup of $T(P)$
generated by $\Omega_P$ is infinite cyclic since $P$ is not cyclic
or quaternion, see \cite{Dade-endo-2}, cf. \cite{Thevenaz-guided}.
Therefore we get that $\ker(r_P^{C_b}(P)) = 0$, which completes
the first part of the proof of the theorem.

For the second part of the proof, first note that the surjective
map $T(P)\ra \obs(C_b(P))$ is the connecting homomorphism of the
following diagram.
\begin{displaymath}
\begin{diagram}
&&0&\rTo& \Zz &\rTo&T(P)&&\\
 & &\dTo & & \dTo&&\dTo&&\\
0&\rTo&C_b(P) &\rTo &B^*(P)&\rTo^{\Psi_P}&D^\Omega(P)&\rTo&0\\
 & &\dTo^{r_P} & & \dTo^{r_P}&&\dTo^{r_P}&&\\
0&\rTo&\varprojlim_{1<Q\le P}C_b(W_P(Q)) &\rTo &\varprojlim_{1<Q\le P}B^*(W_P(Q))&\rTo^{\tilde\Psi_P}&\varprojlim_{1<Q\le P}D^\Omega(W_P(Q))\\
& &\dTo & & \dTo&&&&\\
&&\obs(C_b(P))&\rTo& 0& & & &
\end{diagram}
\end{displaymath}
To be able to construct a splitting for this connecting
homomorphism, notice that by Theorem \ref{thm:glueBS}, we also
have the following commutative diagram
\begin{displaymath}
\begin{diagram}
& & 0 & &0& & 0\\
 & &\dTo & & \dTo&&\dTo&&\\
0&\rTo&C_b(P) &\rTo &\mathcal R_\Qq^*(P)&\rTo^{\Psi_P}&D_t^\Omega(P)&\rTo&0\\
 & &\dTo & & \dTo&&\dTo&&\\
0&\rTo&\varprojlim_{1<Q\le P}C_b(W_P(Q)) &\rTo &\varprojlim_{1<Q\le P}\mathcal R_\Qq^*(W_P(Q))&\rTo&\varprojlim_{1<Q\le P}D_t^\Omega(W_P(Q))\\
 & &\dTo & & \dTo&&\dTo&&\\
0&\rTo&   \tilde H^0_b(\mathcal A_{\ge 2}(P), p\Zz)^P &\rTo&
\tilde
H^0(\mathcal A_{\ge 2}(P), p\Zz)^P&\rTo&\obs(D_t^\Omega)&&\\
 & &\dTo & & \dTo&&\dTo&&\\
& & 0 & &0& & 0\\
\end{diagram}
\end{displaymath}
and the identification
\[
\obs(C_b(P)) = \tilde H^0_b(\mathcal A_{\ge 2}(P), p\Zz)^P.
\]
Here, as usual, $\hookrightarrow$ denotes an injective map and
$\twoheadrightarrow$ denotes a surjective map.

Now the splitting is defined in the following way. First consider
the third diagram in this proof. Let $p n_E \chi_E\in \tilde
H^0_b(\mathcal A_{\ge 2}(P), p\Zz)^P$ be a generator of this group
where $E$ is a maximal elementary abelian subgroup $E$ of $P$ of
rank 2. We regard it as an element in $\tilde H^0(\mathcal A_{\ge
2}(P), p\Zz)^P$. By the proof of Theorem \ref{thm:glueRat}, the
pre-image of $pn_E\chi_E$ in $\varprojlim_{1<Q\le P}\mathcal
R_\Qq^*(W_P(Q))$ is the sequence $(f_Q)_{1<Q\le P}$ given by $f_Q
= 0$ unless $Q$ is conjugate to a non-central cyclic subgroup, say
$A$, of $E$ in which case $f_Q$ is given by $f_Q(T/Q) =
n_E\delta_{T,Q}$. Since it is not zero, the gluing data
$(f_Q)_{1<Q\le P}$ is not coming from a rational class function.

But we pass to the second diagram of this proof by embedding this
data to the group of the gluing data of super class functions.
Then by Theorem \ref{thm:exaBurn}, this data can be glued to a
super class function, say $f$, which is given by, $f(P/Q) = 0$
unless $Q$ is conjugate to $A$ in which case $f(P/Q) = n_E$. Using
the basis $\{ \omega_{P/Q}| Q\le_P P \}$ of $B^*(P)$, we can write
\[
f = n_E(\omega_{P/A} - \omega_{P/1}).
\]
Now by the definition of the map $\Psi^P$, we have
\[
\Psi^P(f) =
[(\Omega_{P/A}\otimes\Omega_{P/1}^{-1})^{\otimes^{n_E}}]
\]
where for an endo-permutation module $M$, we denote its class in
the Dade group by $[M]$. Hence the above construction gives a
homomorphism
\[
\sigma: H^0_b(\mathcal A_{\ge 2}(P), p\Zz)^P\ra D(P).
\]
of abelian groups. Now we claim that the class of
$(\Omega_{P/A}\otimes\Omega_{P/1}^{-1})^{\otimes^{n_E}}$ in the
Dade group $D(P)$ is in the image of the endo-trivial group.
Remark that this class coincides with the class found by Alperin
in \cite{Alperin-endo} where he proved this claim. However we can
give a direct proof of the claim.

To prove the claim, we need to prove that the class
$(\Omega_{P/A}\otimes\Omega_{P/1}^{-1})^{\otimes^{n_E}}$ is in the
kernel of the map $r_P^{D}$ but this is trivial since the gluing
data $(f_Q)_{1<Q\le P}$ is a data for Borel-Smith functions, by
its construction and hence the claim follows from the
commutativity of the second diagram of this proof.

Hence we get that the class
$[(\Omega_{P/A}\otimes\Omega_{P/1}^{-1})^{\otimes^{n_E}}]$ is in
the image of $T(P)$ in $D(P)$ which gives a homomorphism
\[
\sigma: H^0_b(\mathcal A_{\ge 2}(P), p\Zz)^P\ra T(P).
\]
of abelian groups. To end the proof, we need to check that this
map is a splitting for the connecting homomorphism but this is
straightforward from the construction.
\end{proof}
The following theorem follows from the above proof. Note that
although we end up with the same set of generators for the
endo-trivial group found by Alperin in \cite{Alperin-endo}, our
proof does not depend on his result, hence gives another proof of
the classification theorem of Carlson and Th\'{e}venaz in
\cite{CaThClssTri}.

\begin{thm}[Carlson-Th\'{e}venaz\cite{CaThClssTri}] Let $p>2$ be a prime number and $P$ be a non-abelian $p$-group which
is not semi-dihedral. Then the group $T(P)$ of endo-trivial
modules for $P$ is free abelian on the basis $\Omega_P,
\Gamma((\Omega_{P/A}\otimes \Omega_{P/1}^{-1})^{\otimes^{n_E}})$
as $E$ runs over a complete set of representatives of conjugacy
classes of maximal elementary abelian subgroups of $P$ of rank 2.
Here, for a $kP$-module $M$, the module $\Gamma(M)$ is the sum of
all indecomposable summands of $M$ with vertex $P$.
\end{thm}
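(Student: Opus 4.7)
The plan is to obtain this theorem by unpacking Theorem \ref{thm:trivExa} together with the identification $\obs(C_b(P)) \cong \tilde H^0_b(\mathcal A_{\ge 2}(P), p\Zz)^P$ provided by Theorem \ref{thm:glueBS}. Under the hypotheses ($p$ odd, $P$ non-abelian, $P$ not semi-dihedral), $P$ is automatically non-cyclic and non-generalized-quaternion, so Theorem \ref{thm:trivExa} gives a splitting $T(P) \cong \Zz \oplus \tilde H^0_b(\mathcal A_{\ge 2}(P), p\Zz)^P$. The theorem will then follow by exhibiting explicit $\Zz$-bases of the two summands and transporting the basis of the second summand through the splitting $\sigma$ constructed inside the proof of Theorem \ref{thm:trivExa}.

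First I would identify the generator of the $\Zz$ summand: from the proof of Theorem \ref{thm:trivExa}, this copy of $\Zz$ is the image of the subgroup $\Zz \omega_P \subseteq B^*(P)$ under $\Psi_P$, which is $\Zz\, \Omega_P \le T(P)$, so $\Omega_P$ is the required generator. Next I would describe a $\Zz$-basis of $\tilde H^0_b(\mathcal A_{\ge 2}(P), p\Zz)^P$: by its very definition this group is generated by the elements $pn_E \chi_E$ as $E$ runs over conjugacy class representatives of maximal elementary abelian subgroups of $P$ of rank $2$, and by the description of $\mathcal A_{\ge 2}(P)$ recalled from \cite[Lemma 3.1]{BoucThe-glue} these generators are $\Zz$-linearly independent (isolated vertices contribute independent characteristic functions, and passage to the quotient by constants kills only one relation already absorbed into the definition). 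Finally, I would transport this basis through $\sigma$: by the construction in Theorem \ref{thm:trivExa}, the element $pn_E\chi_E$ lifts first to a gluing datum $(f_Q)$ for rational class functions, then to the super class function $n_E(\omega_{P/A} - \omega_{P/1}) \in B^*(P)$, and its image under $\Psi_P$ is the class of $(\Omega_{P/A}\otimes \Omega_{P/1}^{-1})^{\otimes n_E}$ in $D(P)$, whose unique endo-trivial preimage is $\Gamma\bigl((\Omega_{P/A}\otimes \Omega_{P/1}^{-1})^{\otimes n_E}\bigr)$ by definition of $\Gamma$ as the vertex-$P$ summand functor.

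The main obstacle, largely dispatched already inside the proof of Theorem \ref{thm:trivExa}, is checking that $\Gamma\bigl((\Omega_{P/A}\otimes \Omega_{P/1}^{-1})^{\otimes n_E}\bigr)$ is a genuine well-defined element of $T(P)$ whose image under the connecting homomorphism is $pn_E\chi_E$. This requires verifying that the tensor power lies in the image of $T(P) \hookrightarrow D(P)$, which follows because the associated gluing data is in fact Borel-Smith (by the choice of $n_E$) and hence lies in $\ker(r_P^D)$ by the commutativity of the diagram in the proof of Theorem \ref{thm:trivExa}; then torsion-freeness of $T(P)$ under our hypotheses (Theorem 1.1 of \cite{CaThClssTorTri}) ensures that the Dade class determines the endo-trivial lift uniquely. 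Once these points are assembled, $T(P)$ is free abelian with basis $\Omega_P$ together with $\Gamma\bigl((\Omega_{P/A}\otimes \Omega_{P/1}^{-1})^{\otimes n_E}\bigr)$, as claimed.
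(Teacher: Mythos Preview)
Your proposal is correct and is exactly the paper's approach: the paper's own proof is the single sentence ``follows from the above proof,'' and what you have written is precisely the unpacking of the proof of Theorem~\ref{thm:trivExa} that this refers to. The identification of the $\Zz$-summand with $\langle\Omega_P\rangle$, of $\obs(C_b(P))$ with $\tilde H^0_b(\mathcal A_{\ge 2}(P),p\Zz)^P$ via Theorem~\ref{thm:glueBS}, and of $\sigma(pn_E\chi_E)$ with the class of $(\Omega_{P/A}\otimes\Omega_{P/1}^{-1})^{\otimes n_E}$ are all lifted directly from that proof, as is the verification that this class lies in the image of $T(P)$.
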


We end the paper by noting that our proof does not use the
classification theorem of Carlson and Th\'{e}venaz, (which means
that our arguments are not circular). Indeed, we have used three
results from general theory of endo-permutation modules. Namely,
the theorems about two exact sequences from \cite{Bouc-yalcin},
Theorem 7.1 from \cite{Bouc-DadeGroup} which states that the
classes of endo-permutation modules which are not relative
syzygies are torsion and the injectivity of the map $r_P^{D_t}$
from \cite{BoucThe-glue}. Finally we have used the description of
the torsion part of the group $T(P)$ from \cite{CaThClssTorTri}. A
careful reading would show that all these results together with
any result that they refer are independent of this classification
theorem.

\textbf{Acknowledgement.} I would like to thank to Nadia Mazza for
answering a question about the poset $\mathcal A_{\ge 2}(P)$ and
Ergün Yalçın for his useful comments.


\begin{thebibliography}{99}
\bibitem[1]{Alperin-endo}
J.~Alperin, A construciton of endo-permutaion modules,  J. Group
Theory,  \textbf{4}, 3-10 (2001).

\bibitem[2]{Bouc-book} S.~Bouc, Biset functors for finite groups, Springer-Verlag Berlin Heidelberg 2010.

\bibitem[3]{Bouc-DadeBurnside}
S.~Bouc, A remark on the Dade group and the Burnside group, J.
Algebra, \textbf{279}, 180-190 (2004).

\bibitem[4]{Bouc-DadeGroup}
S.~Bouc,  Dade group of a $p$-group, Invent. Math., \textbf{164},
189-231 (2006).

\bibitem[5]{Bouc-glue}
S.~Bouc, Gluing endo-permutation modules, J. Group Theory,
\textbf{12}, 651-678 (2009).

\bibitem[6]{BT}
S.~Bouc, J.~Th\'{e}venaz, The Group of Endo-permutation Modules,
Invent. Math., \textbf{139}, 275-349 (2000).

\bibitem[7]{BoucThe-glue}
S.~Bouc, J.~Th\'{e}venaz, Gluing torsion endo-permutation modules,
J. London Math. Soc. \textbf{78}, 477-501 (2008).

\bibitem[8]{Bouc-yalcin}
S.~Bouc, E.~Yalçın, Borel-Smith functions and the Dade group, J.
Algebra \textbf{311}, 821-839 (2007).

\bibitem[9]{Ca1}
J.~Carlson, Constructing endotrivial modules,  J. Pure Appl.
Algebra, \textbf{206}, 83-110 (2006).

\bibitem[10]{CaThClssTri}
J.~Carlson, J.~Th\'{e}venaz, The classifiction of endo-trivial
modules, Invent. Math. \textbf{158}, 389-411 (2004).

\bibitem[11]{CaThClssTorTri}
J.~Carlson, J.~Th\'{e}venaz, The classifiction of torsion
endo-trivial modules, Annals Math. \textbf{162}, 823-883 (2005).

\bibitem[12]{Co-Alc}
O.~Coşkun, Alcahestic subalgebras of the alchemic algebra and a
correspondence of simple modules, J. Algebra, \textbf{320},
2422-2450 (2008).

\bibitem[13]{Dade-endo-1}
E.~Dade, \emph{Endo-permutation modules over $p$-groups I}, Ann.
Math. 107, 459-494 (1978).

\bibitem[14]{Dade-endo-2}
E.~Dade, Endo-permutation modules over $p$-groups II, Ann. Math.,
\textbf{108}, 317-346 (1978).

\bibitem[15]{GlaMaz}
G.~Glauberman, N.~Mazza, $p$-Groups with maximal elementary
abelian subgroups of rank 2, J. Algebra, \textbf{323}, 1729-1737
(2010).

\bibitem[16]{Maz}
N.~Mazza, private communication.

\bibitem[17]{Puig}
L.~Puig, An affirmative answer to a question of Feit, J. Algebra,
\textbf{131}, 513-526 (1990).

\bibitem[18]{Thevenaz-guided}
J.~Th\'{e}venaz, Endo-permutation modules: a guided tour, Group
representation theory,  115-147, EPFL Press, Lausanne, 2007.

\bibitem[19]{tomDie}
T.~tom Dieck, Transformation groups, de Gruyter Stud. Math., vol.
8, de Gruyter, New York, 1987.
\end{thebibliography}
\end{document}